\definecolor{red}{RGB}{255,0,0}
\newcommand{\cB}{\mathcal{B}}
\newcommand{\cD}{\mathcal{D}}
\newcommand{\cK}{\mathcal{K}}
\newcommand{\cL}{\mathcal{L}}
\newcommand{\cN}{\mathcal{N}}
\newcommand{\cP}{\mathcal{P}}
\newcommand{\cT}{\mathcal{T}}
\newcommand{\fH}{\mathfrak{H}}
\newcommand{\fS}{\mathfrak{S}}
\newcommand{\bC}{\mathbb{C}}
\newcommand{\bE}{\mathbb{E}}
\newcommand{\bN}{\mathbb{N}}
\newcommand{\bR}{\mathbb{R}}
\newcommand{\PR}{\mathbb{P}}
\newcommand{\bONE}{\mathbbm{1}}
\newcommand{\dd}{\, \mathrm{d}}
\newcommand{\re}{ \mathrm{Re}\,}
\DeclareMathOperator{\Tr}{Tr} 
\renewcommand{\epsilon}{\varepsilon}
\newcommand{\vn}[1]{\left| \! \left| #1\right| \! \right|} 
\newcommand{\tn}[1]{\left| \! \left| \! \left|#1\right| \! \right| \! \right|}
\newcommand{\ip}[2]{\langle #1,#2\rangle}
\newcommand{\p}[1]{p\left( #1\right)}
\numberwithin{equation}{section}
\newtheorem{theorem}{Theorem}[section]
\newtheorem{lemma}[theorem]{Lemma}
\newtheorem{proposition}[theorem]{Proposition}
\newtheorem{corollary}[theorem]{Corollary}
\theoremstyle{definition}
\newtheorem{definition}[theorem]{Definition}
\newtheorem{remark}[theorem]{Remark}
\newtheorem{example}[theorem]{Example}
\newtheorem{condition}[theorem]{Condition}
\newtheorem*{Condition}{Condition}
\begin{document}

\title{Strongly continuous and locally equi-continuous semigroups on locally convex spaces}

\author{
\renewcommand{\thefootnote}{\arabic{footnote}}
Richard Kraaij
\footnotemark[1]
}

\footnotetext[1]{
Delft Institute of Applied Mathematics, Delft University of Technology, Mekelweg 4, 
2628 CD Delft, The Netherlands, E-mail: \texttt{r.c.kraaij@tudelft.nl}.
}

\date{20-04-2014}

\maketitle

\begin{abstract}
We consider locally equi-continuous strongly continuous semigroups on locally convex spaces $(X,\tau)$ that are also equipped with a `suitable' auxiliary norm. We introduce the set $\cN$ of $\tau$-continuous semi-norms that are bounded by the norm. If $(X,\tau)$ has the property that $\cN$ is closed under countable convex combinations, then a number of Banach space results can be generalised in a straightforward way. Importantly, we extend the Hille-Yosida theorem.

We relate our results to those on bi-continuous semigroups and show that they can be applied to semigroups on $(C_b(E),\beta)$ and $(\cB(\fH),\beta)$ for a Polish space $E$ and a Hilbert space $\fH$ and where $\beta$ is their respective strict topology.
\end{abstract}



\section{Introduction}

The study of Markov processes on complete separable metric spaces $(E,d)$ naturally leads to transition semigroups on $C_b(E)$ that are not strongly continuous with respect to the norm. Often, these semigroups turn out to be strongly continuous with respect to the weaker locally convex strict topology.

This leads to the study of strongly continuous semigroups on locally convex spaces. For equi-continuous semigroups, the theory is developed analogously to the Banach space situation for example in Yosida\cite{Yo78}. When characterising the operators that generate a semigroup, the more general context of locally equi-continuous semigroups introduces new technical challenges. Notably, the integral representation of the resolvent is not necessarily available. To solve this problem, K{\=o}mura, \={O}uchi and Dembart \cite{Ko68,Ou73,De74} have studied various generalised resolvents. More recently, Albanese and K\"{u}hnemund \cite{AK02} also study asymptotic pseudo resolvents and give a Trotter-Kato approximation result and the Lie-Trotter product formula.

A different approach is used in recent papers where a subclass of locally convex spaces $(X,\tau)$ is considered for which the ordinary representation of the resolvent can be obtained. Essentially, these spaces are also equipped with a norm $\vn{\cdot}$ such $(X,\vn{\cdot})$ is Banach and such that the dual $(X,\tau)'$ is norming for $(X,\vn{\cdot})$. Bi-continuous semigroups have been studied in \cite{Ku03,AM04,Fa04}, in which the Hille-Yosida, Trotter Approximation theorem and perturbation results have been shown. Bi-continuity has the drawback, however, that it is a non-topological notion.
Kunze \cite{Ku09,Ku11} studies semigroups of which he assumes that the resolvent can be given in integral form. His notions are topological, and he gives a Hille-Yosida theorem for equi-continuous semigroups.

\smallskip

In Section \ref{section:first_set}, we start with some minor results for locally convex spaces $(X,\tau)$ that are strong Mackey. These spaces are of interest, because a strongly continuous semigroup on a strong Mackey space is automatically locally equi-continuous, which extends a result by K{\=o}mura\cite{Ko68} for barrelled spaces.

From that point onward, we will consider sequentially complete locally convex spaces $(X,\tau)$ that are additionally equipped with an `auxiliary' norm. We assume that the norm topology is finer than $\tau$, but that the norm and $\tau$ bounded sets coincide. In Section \ref{section:boundedness_conditions}, we define $\cN$ as the set of $\tau$ continuous semi-norms that are bounded by the norm. We say that the space satisfies Convexity Condition C if $\cN$ is closed under taking countable convex combinations. This property allows the generalisation of a number of results in the Banach space theory. First of all, strong continuity of a semigroup on a space satisfying Condition C implies the exponential boundedness of the semigroup. Second, in Section \ref{section:infinitesimal_properties}, we show that the resolvent can be expressed in integral form. Third, in Section \ref{section:generation_results}, we give a straightforward proof of the Hille-Yosida theorem for strongly continuous and locally equi-continuous semigroups.

The strength of spaces that satisfy Condition C and the set $\cN$ is that results from the Banach space theory generalise by replacing the norm by semi-norms from $\cN$. Technical difficulties arising from working with the set $\cN$ instead of the norm are overcome by the probabilistic techniques of stochastic domination and Chernoff's bound, see Appendix \ref{section:appendix_orderings}.

\smallskip

In Section \ref{section:comparison_bicontinuous}, we consider $\tau$ bi-continuous semigroups. We show that if the so called mixed topology $\gamma = \gamma(\vn{\cdot},\tau)$, introduced by Wiweger\cite{Wi61}, has good sequential properties, bi-continuity of a semigroup for $\tau$ is equivalent to strong continuity and local equi-continuity for $\gamma$. 

In Section \ref{section:application_markov}, we show that the spaces $(C_b(E),\beta)$ and $(\cB(\fH),\beta)$, where $E$ is a Polish space, $\fH$ a Hilbert space and where $\beta$ is their respective strict topology, are strong Mackey and satisfy Condition C. This implies that our results can be applied to Markov transition semigroups on $C_b(E)$ and quantum dynamical semigroups on $\cB(\fH)$.

\section{Preliminaries} \label{section:general_definitions}

We start with some notation. Let $(X,\tau)$ be a locally convex space. We call the family of operators $\{T(t)\}_{t \geq 0}$ a \textit{semigroup} if $T(0) = \bONE$ and $T(t)T(s) = T(t+s)$ for $s,t \geq 0$. A family of $(X,\tau)$ continuous operators $\{T(t)\}_{t \geq 0}$ is called a \textit{strongly continuous semigroup} if $t \mapsto T(t)x$ is continuous and \textit{weakly continuous} if $t \mapsto \ip{T(t)x}{x'}$ is continuous for every $x \in X$ and $x \in X'$. 

We call $\{T(t)\}_{t \geq 0}$ a \textit{locally equi-continuous} family if for every $t \geq 0$ and continuous semi-norm $p$, there exists a continuous semi-norm $q$ such that $\sup_{s \leq t} p(T(s)x) \leq q(x)$ for every $x \in X$. 

Furthermore, we call $\{T(t)\}_{t \geq 0}$ a \textit{quasi equi-continuous} family if there exists $\omega \in \bR$ such that for every continuous semi-norm $p$, there exists a continuous semi-norm $q$ such that $\sup_{s \geq 0} e^{-\omega t} p(T(s)x) \leq q(x)$ for every $x \in X$.
Finally, we abbreviate strongly continuous and locally equi-continuous semigroup to \textit{SCLE} semigroup.

\smallskip

We use the following notation for duals and topologies. $X^*$ is the algebraic dual of $X$ and $X'$ is the continuous dual of $(X,\tau)$. Finally, $X^+$ is the sequential dual of $X$:
\begin{equation*}
X^+ := \{f \in X^* \, | \, f(x_n) \rightarrow 0, \text{ for every sequence } x_n \in X \text{ converging to 0} \}.
\end{equation*}
We write $(X,\sigma(X,X'))$, $(X,\mu(X,X'))$, $(X,\beta(X,X'))$, for $X$ equipped with the weak, Mackey or strong topology. Similarly, we define the weak, Mackey and strong topologies on $X'$. For any topology $\tau$, we use $\tau^+$ to denote the strongest locally convex topology having the same convergent sequences as $\tau$ \cite{We68}.

\section{Strong Mackey spaces: Connecting strong continuity and local equi-continuity} \label{section:first_set}

We start with a small exposition on a subclass of locally convex spaces that imply nice `local' properties of semigroups. 

K{\=o}mura\cite[Proposition 1.1]{Ko68} showed that on a barrelled space a strongly continuous semigroup is automatically locally equi-continuous. This fact is proven for the smaller class of Banach spaces in Engel and Nagel \cite[Proposition I.5.3]{EN00}, where they use the strong continuity of $\{T(t)\}_{t \geq 0}$ at $t = 0$ and the Banach Steinhaus theorem.

This approach disregards the fact that $\{T(t)\}_{t \geq 0}$ is strongly continuous for all $t \geq 0$ and Kunze \cite[Lemma 3.8]{Ku09} used this property to show that, in the case that every weakly compact subset of the dual is equi-continuous, strong continuity implies local equi-continuity.

\begin{definition}
We say that a locally convex space $(X,\tau)$ is \textit{strong Mackey} if all $\sigma(X',X)$ compact sets in $X'$ are equi-continuous.
\end{definition}

Following the proof of Lemma 3.8 in \cite{Ku09}, we obtain the following result.

\begin{lemma} \label{lemma:strongly_cont_implies_equicont}
If a semigroup $\{T(t)\}_{t\geq 0}$ of continuous operators on a strong Mackey space is strongly continuous, then the semigroup is locally equi-continuous.
\end{lemma}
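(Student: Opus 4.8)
The plan is to exploit both strong continuity on all of $[0,\infty)$ (not just at $0$) together with property (b) of type A spaces, following the spirit of Kunze's Lemma 3.8. Fix $t \geq 0$ and a continuous semi-norm $p$; I want to produce a continuous semi-norm $q$ with $\sup_{s \leq t} p(T(s)x) \leq q(x)$ for all $x$. The first step is to translate the statement into the dual. For each $s$, since $T(s)$ is $\tau$-continuous, its adjoint $T(s)'$ maps $X'$ into $X'$; and for a fixed $x$, the map $s \mapsto T(s)x$ is $\tau$-continuous, hence in particular $s \mapsto \ip{T(s)x}{x'}$ is continuous for every $x' \in X'$. The key object to consider is the set
\[
K := \{ T(s)' x' \, | \, 0 \leq s \leq t, \ x' \in V^\circ \},
\]
where $V = \{x : p(x) \leq 1\}$ and $V^\circ$ is its polar in $X'$, so that $p(y) = \sup_{x' \in V^\circ} |\ip{y}{x'}|$.

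The heart of the argument is to show that $K$ is relatively $\sigma(X',X)$ compact, equivalently that its $\sigma(X',X)$ closure $\overline{K}$ is $\sigma(X',X)$ compact; then by type A property (b), $\overline{K}$ is equi-continuous, which means there is a continuous semi-norm $q$ with $|\ip{x}{x'}| \leq q(x)$ for all $x' \in \overline{K}$, and taking the supremum gives $p(T(s)x) = \sup_{x' \in V^\circ}|\ip{T(s)x}{x'}| = \sup_{x' \in V^\circ}|\ip{x}{T(s)'x'}| \leq q(x)$ for all $s \leq t$, as desired. To prove compactness of $\overline K$: note $V^\circ$ is $\sigma(X',X)$ compact by Alaoglu–Bourbaki, and $[0,t]$ is compact, so it suffices to show the map $(s,x') \mapsto T(s)'x'$ from $[0,t]\times (V^\circ, \sigma(X',X))$ into $(X',\sigma(X',X))$ is continuous, or at least that $K$ lands in a $\sigma(X',X)$ compact set via a suitable argument. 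Continuity is checked pointwise against $x \in X$: $(s,x') \mapsto \ip{T(s)x}{x'} = \ip{x'}{\text{(value)}}$ — here I need joint continuity of $(s,x') \mapsto \ip{T(s)x}{x'}$, which follows from separate continuity (continuous in $s$ for fixed $x'$ by strong continuity, linear hence continuous in $x'$ for fixed $s$) plus equicontinuity in one variable, OR more robustly: for the compactness conclusion I can argue that $\overline K$ consists of linear functionals on $X$ that are pointwise limits, and I must check each is actually in $X'$ (not merely $X^*$) — this is where sequential completeness, property (a), and the structure enter.

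The step I expect to be the main obstacle is precisely verifying that $\overline{K} \subseteq X'$ and that it is $\sigma(X',X)$ compact — i.e. controlling the limit points of $K$. The clean route: show $K$ is $\sigma(X',X)$ relatively compact in $X^*$ by Tychonoff (it is pointwise bounded since for fixed $x$, $\{\ip{T(s)x}{x'} : s\leq t, x'\in V^\circ\}$ is bounded as $s\mapsto T(s)x$ is continuous on the compact $[0,t]$ hence has bounded, in fact compact, range, and $V^\circ$ is equicontinuous so uniformly bounded on that range); then the closure in $X^*$ is $\sigma(X^*,X)$ compact. It remains to see this closure lies in $X'$: an element of the closure is a limit of a net $T(s_\alpha)' x'_\alpha$; passing to subnets with $s_\alpha \to s \in [0,t]$ and $x'_\alpha \to x'' \in V^\circ$ (using compactness of $[0,t]$ and of $V^\circ$), one identifies the limit with $T(s)'x''$ using separate continuity and an equicontinuity/Grothendieck-type interchange-of-limits argument — this uses that $V^\circ$ is equicontinuous together with $s\mapsto T(s)x$ being $\tau$-continuous. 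Once $\overline K = \{T(s)'x'' : s\in[0,t], x''\in V^\circ\} \subseteq X'$ is established as $\sigma(X',X)$ compact, type A (b) finishes it. I would also double-check the degenerate behaviour at $s$ near endpoints and that $q$ can be chosen independent of $s$, which is automatic since $q$ comes from the equicontinuity of the single set $\overline K$.
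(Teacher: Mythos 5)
Your proposal is correct and follows essentially the same route as the paper's proof: you reduce local equi-continuity to showing that the adjoint family maps an equi-continuous set (here the polar $V^\circ$, where the paper invokes K\"othe 39.3.(4) for an arbitrary equi-continuous $U$) into a relatively $\sigma(X',X)$-compact set, prove compactness by the subnet argument on $[0,t]\times V^\circ$ with the two-term split using equi-continuity of the polar plus strong continuity to identify the limit as $T(s)'x''$, and then conclude via property (b) of type A. The only cosmetic difference is that you verify the final semi-norm bound by hand with polars rather than citing the duality criterion for equi-continuity of the operator family.
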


We start with a proposition that gives sufficient conditions for a space to be strong Mackey.

\begin{proposition} \label{proposition:what_implies_compactequicont}
Any of the following properties implies that $(X,\tau)$ is strong Mackey.
\begin{enumerate}[(a)]
\item $(X,\tau)$ is barrelled.
\item $(X,\tau)$ is sequentially complete and bornological.
\item The space $(X,\tau)$ is sequentially complete, Mackey and the continuous dual $X'$ of $X$ is equal to the sequential dual $X^+$ of $X$.
\end{enumerate}
\end{proposition}
A space for which $X^+ = X'$ is called a \textit{Mazur} space\cite{Wi81}, or \textit{weakly semi bornological}\cite{BS96}. Note that a Mackey Mazur space satisfies $\tau = \tau^+$ by Corollary 7.6 in \cite{Wi81}. On the other hand, a space such that $\tau^+ = \tau$ is Mazur.

\begin{proof}
By K\"{o}the\cite[21.2.(2)]{Ko69}, the topology of a barrelled space coincides with the strong topology $\beta(X,X')$, in other words, all weakly bounded, and thus all weakly compact, sets are equi-continuous. 

Statement (b) follows from (a) as a sequentially complete bornological space is barrelled, see 28.1.(2) in K\"{o}the\cite{Ko69}. 

We now prove (c). The sequential completeness of $(X,\tau)$ and $X' = X^+$ imply that $(X',\mu(X',X))$ is complete by Corollary 3.6 in Webb \cite{We68}.

Let $K \subseteq X'$ be $\sigma(X',X)$ compact. By Krein's theorem \cite[24.4.(4)]{Ko69}, the completeness of $(X',\mu(X',X))$ implies that the absolutely convex cover of $K$ is also $\sigma(X',X)$ compact. By the fact that $\tau$ is the Mackey topology, every absolutely convex compact set in $(X',\sigma(X',X))$ is equi-continuous \cite[21.4.(1)]{Ko69}. This implies that $K$ is also equi-continuous.
\end{proof}

As an application of Lemma \ref{lemma:strongly_cont_implies_equicont}, we have the following proposition, which states that strong continuity is determined by local properties of the semigroup.

\begin{proposition} \label{proposition:equivalences_strong_continuity}
A semigroup $\{T(t)\}_{t\geq 0}$ of continuous operators on a strong Mackey space is strongly continuous if and only if the following two statements hold
\begin{enumerate}[(i)]
\item There is a dense subset $D \subseteq X$ such that $\lim_{t \rightarrow 0} T(t)x = x$ for every $x \in D$.
\item $\{T(t)\}_{t \geq 0}$ is locally equi-continuous.
\end{enumerate}
\end{proposition}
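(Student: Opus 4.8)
The forward direction is immediate: if $\{T(t)\}_{t\geq 0}$ is strongly continuous, then $\lim_{t\to 0}T(t)x = x$ for every $x\in X$, so (i) holds with $D = X$, and (ii) is exactly the content of Lemma \ref{lemma:strongly_cont_implies_equicont}. So the work is entirely in the converse. Assume (i) and (ii). The plan is to first upgrade (i) from the dense set $D$ to all of $X$ using the local equi-continuity, and then to propagate continuity at $0$ to continuity at every $t\geq 0$ by the usual semigroup algebra, again controlled by (ii).

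For the first step, fix $t_0 = 0$, a continuous semi-norm $p$, and $x\in X$. By (ii) applied with $T=1$, there is a continuous semi-norm $q$ with $\sup_{s\leq 1}p(T(s)y)\leq q(y)$ for all $y\in X$. Given $\epsilon>0$, pick $y\in D$ with $q(x-y) + p(x-y) < \epsilon$ (possible since $D$ is dense and $p,q$ are continuous, hence $p+q$ is a continuous semi-norm). Then for $s\leq 1$,
\[
p(T(s)x - x) \leq p(T(s)(x-y)) + p(T(s)y - y) + p(y-x) \leq q(x-y) + p(T(s)y-y) + p(x-y),
\]
and the middle term tends to $0$ as $s\downarrow 0$ by (i). Hence $\limsup_{s\downarrow 0}p(T(s)x-x)\leq \epsilon$, and since $\epsilon,p,x$ were arbitrary, $T(s)x\to x$ as $s\downarrow 0$ for every $x\in X$; that is, $\{T(t)\}$ is strongly continuous at $0$.

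For the second step, fix $t_0 > 0$, $x\in X$, and a continuous semi-norm $p$. Apply (ii) with $T = t_0 + 1$ to get a continuous semi-norm $q$ with $\sup_{s\leq t_0+1}p(T(s)y)\leq q(y)$ for all $y$. For $0\leq h\leq 1$, right continuity follows from
\[
p(T(t_0+h)x - T(t_0)x) = p\big(T(t_0)(T(h)x - x)\big) \leq q(T(h)x - x) \longrightarrow 0
\]
as $h\downarrow 0$, using strong continuity at $0$ (now available for the vector $x$ and the semi-norm $q$). For left continuity, write for $0\leq h\leq \min(t_0,1)$
\[
p(T(t_0)x - T(t_0-h)x) = p\big(T(t_0-h)(T(h)x - x)\big) \leq q(T(h)x - x) \longrightarrow 0
\]
as $h\downarrow 0$, again by strong continuity at $0$ applied to $x$ and $q$. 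Combining the two, $h\mapsto T(t_0+h)x$ is continuous at $h=0$, so $t\mapsto T(t)x$ is continuous at $t_0$; as $t_0$ and $x$ were arbitrary, the semigroup is strongly continuous.

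The only genuinely delicate point is the first step: without local equi-continuity one cannot pass from the dense set $D$ to all of $X$, since a pointwise $\epsilon/3$-argument needs a uniform bound on $p(T(s)\,\cdot\,)$ for small $s$ — this is precisely what (ii) supplies, and it is the reason type-A (rather than merely the density hypothesis) enters, via Lemma \ref{lemma:strongly_cont_implies_equicont} having already been used to make (ii) a consequence of strong continuity in the other direction. Everything else is the standard $\epsilon/3$ and semigroup-factorization bookkeeping.
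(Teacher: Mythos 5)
Your proof is correct and takes essentially the same route as the paper: the forward direction via Lemma \ref{lemma:strongly_cont_implies_equicont}, and the converse by the $\varepsilon/3$ density argument made uniform in small $t$ through local equi-continuity, followed by the semigroup factorizations $T(t_0+h)x-T(t_0)x=T(t_0)\bigl(T(h)x-x\bigr)$ and $T(t_0-h)x-T(t_0)x=T(t_0-h)\bigl(x-T(h)x\bigr)$. The only cosmetic difference is that you pick a single approximant from $D$ against the continuous semi-norm $p+q$ instead of an approximating net, which amounts to the same estimate.
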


In the Banach space setting, strong continuity of the semigroup is equivalent to strong continuity at $t=0$, see Proposition I.5.3 in Engel and Nagel \cite{EN00}. In the more general situation, this equivalence does not hold, see Example 5.2 in Kunze \cite{Ku09}.

\begin{proof}
Suppose that $\{T(t)\}_{t \geq 0}$ is strongly continuous.
(i) follows immediately and (ii) follows from Lemma \ref{lemma:strongly_cont_implies_equicont}.

\smallskip

For the converse, suppose that we have (i) and (ii) for the semigroup $\{T(t)\}_{t \geq 0}$. First, we show that $\lim_{t \downarrow 0} T(t)x = x$ for every $x \in X$. Pick some $x \in X$ and let $x_\alpha$ be an approximating net in $D$ and let $p$ be a continuous semi-norm and fix $\varepsilon > 0$. We have
\begin{equation*}
p(T(t)x - x)  \leq p(T(t)x - T(t)x_\alpha) + p(T(t)x_\alpha - x_\alpha) + p(x_\alpha - x). 
\end{equation*}
Choose $\alpha$ large enough such that the first and third term are smaller than $\varepsilon/3$. This can be done independently of $t$, for $t$ in compact intervals, by the local equi-continuity of $\{T(t)\}_{t \geq 0}$. Now let $t$ be small enough such that the middle term is smaller than $\varepsilon/3$.

\smallskip

We proceed with the proving the strong continuity of $\{T(t)\}_{t \geq 0}$. The previous result clearly gives us $\lim_{s \downarrow t} T(s)x = T(t)x$ for every $x \in X$, so we are left to show that $\lim_{s \uparrow t} T(s)x = T(t)x$.

For $h > 0$ and $x \in X$, we have $T(t-h)x - T(t)x = T(t-h)\left(x - T(h)x\right)$, so the result follows by the right strong continuity and the local equi-continuity of the semigroup $\{T(t)\}_{t \geq 0}$.
\end{proof}

A second consequence of Lemma \ref{lemma:strongly_cont_implies_equicont}, for quasi complete spaces, follows from Proposition 1 in Albanese, Bonet and Ricker \cite{ABR12}.

\begin{proposition} \label{prop:stronglycont_iff_weaklycont}
Suppose that we have a semigroup of continuous operators $\{T(t)\}_{t \geq 0}$ on a quasi complete strong Mackey space. Then the semigroup is strongly continuous if and only if it is weakly continuous and locally equi-continuous.
\end{proposition}

As in the Banach space situation, it would be nice to have some condition that implies that the semigroup, suitably rescaled is globally bounded. We directly run into major restrictions.

\begin{example}
Consider $C_c^\infty(\bR)$ the space of test functions, equipped with its topology as a countable strict inductive limit of Fr\'{e}chet spaces. 
This space is complete \cite[Theorem 13.1]{Tr67}, Mackey \cite[Propositions 34.4 and 36.6]{Tr67} and $C_c^\infty(\bR)^+ = C_c^\infty(\bR)'$ as a consequence of \cite[Corollary 13.1.1]{Tr67}.

Define the semigroup $\{T(t)\}_{t \geq 0}$ by setting $\left(T(t)f\right)(s) = f(t+s)$. This semigroup is strongly continuous, however, even if exponentially rescaled, it can never be globally bounded by 19.4.(4) \cite{Ko69}.
\end{example}

So even if $(X,\tau)$ is strong Mackey, we can have semigroups that have undesirable properties. This issue is serious. For example, in the above example, formally writing the resolvent corresponding to the semigroup in its integral form, yields a function which is not in $C_c^\infty(\bR)$. One can work around this problem, see for example \cite{De74,Ko68,Ou73} which were already mentioned in the introduction.

However, motivated by the study of Markov processes, where the resolvent informally corresponds to evaluating the semigroup at an exponential random time, we would like to work in a framework in which the ordinary integral representation for the resolvent holds.

\section{A suitable structure of bounded sets} \label{section:boundedness_conditions}

In this section, we shift our attention to another type of locally convex spaces. As a first major consequence, we are able to show in Corollary \ref{corollary:exponential_continuity} an analogue of the exponential boundedness of a strongly continuous semigroup on a Banach space. This indicates that we may be able to mimic major parts of the Banach space theory.

Suppose that $(X,\tau)$ is a locally convex space, and suppose that $X$ can be equipped with a norm $\vn{\cdot}$, such that $\tau$ is weaker than the norm topology. It follows that bounded sets for the norm are bounded sets for $\tau$. This means that if we have a $\tau$-continuous semi-norm $p$, then there exists some $M > 0$ such that $\sup_{x : \vn{x} \leq 1} p(x) \leq M$. Therefore, $p(x) \leq M \vn{x}$ for every $x$, i.e. every $\tau$-continuous semi-norm is dominated by a constant times the norm.

\begin{definition} \label{definition:def_of_N}
Let $(X,\tau)$ be equipped with a norm $\vn{\cdot}$ such that $\tau$ is weaker than the norm topology. Denote by $\cN$ the $\tau$-continuous semi-norms that satisfy $p(\cdot) \leq \vn{\cdot}$. We say that $\cN$ is \textit{countably convex} if for any sequence $p_n$ of semi-norms in $\cN$ and $\alpha_n \geq 0$ such that $\sum_n \alpha_n = 1$, we have that $p(\cdot) := \sum_n \alpha_n p_n(\cdot) \in \cN$.
\end{definition}

We start with exploring the situation where $\tau$ and $\vn{\cdot}$ have the same bounded sets.

\begin{Condition}[Boundedness condition B]
A locally convex space $(X,\tau)$ also equipped with a norm $\vn{\cdot}$, denoted by $(X,\tau,\vn{\cdot})$, satisfies \textit{Condition B} if 
\begin{enumerate}[(a)]
\item $\tau$ is weaker than the norm topology.
\item Both topologies have the same bounded sets.
\end{enumerate}
\end{Condition}

\begin{remark} \label{remark:mixed_topology}
Suppose that $(X,\tau)$ is a locally convex space, and suppose that $\vn{\cdot}$ is a norm on $X$ such that the norm topology is stronger than $\tau$, but such that the norm topology has less bounded sets than $\tau$. 

In this case, it is useful to consider the \textit{mixed} topology $\gamma = \gamma(\vn{\cdot},\tau)$, introduced in Wiweger \cite{Wi61}. In Section \ref{section:comparison_bicontinuous}, we study the relation of bi-continuous semigroups for $\tau$ with SCLE semigroups for $\gamma$.
\end{remark}

We introduce some notation. We write $X'_n := (X,\vn{\cdot})'$ and $X'_\tau := (X,\tau)'$. Also, we denote $B_n := \{x' \in X_n' \, | \, \vn{x'}' \leq 1 \}$, where $\vn{\cdot}'$ is the operator norm on $X_n'$. Finally, we set $B_\tau = B_n \cap X_\tau'$. We start with a well known theorem that will aid our exposition. 

\begin{theorem}[Bipolar Theorem] \label{theorem:bipolar}
Let $(X,\tau)$ be a locally convex space and let $\vn{\cdot}$ be a norm on $X$. Let $p$ be a $\tau$ lower semi-continuous semi-norm such that $p \leq \vn{\cdot}$. Then there exists a absolutely convex weakly bounded set $\fS := \{p \leq 1\}^\circ \subseteq B_\tau$ such that
\begin{equation*}
p(x) = \sup_{x' \in \fS} |\ip{x}{x'}|.
\end{equation*}
Furthermore, $p$ is continuous if and only if $\fS$ is an equi-continuous set.
\end{theorem}

\begin{proof}
The result follows from 20.8.(5) and 21.3.(1) in K\"{o}the\cite{Ko69}. The fact that $\fS \subseteq B_\tau$ is a consequence of $p \in \cN$.
\end{proof}

\begin{lemma} \label{lemma:equivalences_boundedness}
Let $(X,\tau)$ be sequentially complete locally convex space, and $\vn{\cdot}$ a norm on $X$ such that the norm topology is stronger than $\tau$. Then the following are equivalent.
\begin{enumerate}[(a)]
\item The norm bounded sets equal the $\tau$ bounded sets.
\item $\vn{\cdot}$ is $\tau$ lower semi-continuous.
\item The norm can be expressed as $\vn{x} = \sup_{x' \in B_\tau} |\ip{x}{x'}|$.
\end{enumerate}
In all cases, the topology generated by $\vn{\cdot}$ is the $\beta(X,X_\tau')$ topology and is Banach. The norm can equivalently be written as 
\begin{equation} \label{eqn:norm_given_as_sup_over_N}
\vn{x} = \sup_{p \in \cN} p(x).
\end{equation}
\end{lemma}

\begin{proof}
We start with the proof of (a) to (b). Define the $\beta(X,X_\tau')$ continuous norm $\tn{x} := \sup_{x' \in B_\tau} |\ip{x}{x'}|$. Note that $\tn{\cdot} \leq \vn{\cdot}$ by construction. It follows that the $\vn{\cdot}$ topology is stronger than the $\tn{\cdot}$ topology, which is in turn stronger than $\tau$. The bounded sets of the two extremal topologies are the same, so the $\vn{\cdot}$ and the $\tn{\cdot}$ bounded sets coincide. Thus, there is some $c \geq 1$ such that $\tn{\cdot} \leq \vn{\cdot} \leq c \tn{\cdot}$. But this means that $\vn{\cdot}$ is $\beta(X,X_\tau')$ continuous, and thus $\tau$ lower semi-continuous.

Now assume (b), we prove (a). As $\tau$ is weaker than the norm topology, it follows that the norm topology has less bounded sets. On the other hand, as the norm is $\tau$ lower semi-continuous, it is continuous for the strong topology $\beta(X,X_\tau')$. Therefore, the strong topology has less bounded sets than the norm topology. As $(X,\tau)$ is sequentially complete, the Banach-Mackey theorem, 20.11.(3) in K\"{o}the\cite{Ko69} shows that the strongly bounded sets and the $\tau$ bounded sets coincide, which implies (a).

(c) clearly implies (b) and (b) implies (c) by the Bipolar theorem. 

$(X,\tau)$ is Banach by 18.4.(4) in \cite{Ko69}. 
\end{proof}

The usefulness of $\cN$ becomes clear from the next three results. Intuitively, the next two lemmas tell us that in the study of semigroups on these locally convex spaces the collection $\cN$ replaces the role that the norm plays for semigroups on Banach spaces.

\begin{lemma} \label{lemma:equivalence_norm_and_N_bounds}
Let $(X,\tau,\vn{\cdot})$ satisfy Condition B. Let $I$ be some index set and let $(T_\alpha)_{\alpha \in I}$ be $(X,\tau)$ to $(X,\tau)$ continuous operators. Then the following are equivalent
\begin{enumerate}[(a)]
\item The family $\{T_\alpha\}_{\alpha \in I}$ is $\tau$-equi-continuous and $\sup_{\alpha \in I} \vn{T_\alpha} \leq M$.
\item For every $p \in \cN$, there is $q \in \cN$ such that $\sup_{\alpha \in I} p(T_\alpha x) \leq M q(x)$ for all $x \in X$.
\end{enumerate}
Furthermore, if the family $\{T_\alpha\}_{\alpha \in I}$ is $\tau$-equi-continuous, then there exists $M \geq 0$ such that these properties hold.
\end{lemma}

\begin{proof}
The implication (b) to (a) follows from Equation \eqref{eqn:norm_given_as_sup_over_N}. For the proof of (a) to (b), fix some semi-norm $p \in \cN$. As the family $\{T_\alpha\}_{\alpha \in I}$ is $\tau$-equi-continuous, there is some continuous semi-norm $\hat{q}$ such that $\sup_{\alpha \in I} p(T_\alpha x) \leq \hat{q}(x)$. This implies that $q(x) := M^{-1} \sup_{\alpha \in I}  p(T_\alpha x)$ is $\tau$-continuous. We conclude that $q \in \cN$ by noting that
\begin{equation*}
q(x) = \frac{1}{M} \sup_{\alpha \in I} p(T_\alpha x) \leq \frac{1}{M} \sup_{\alpha \in I} \vn{T_\alpha x} \leq \vn{x}.
\end{equation*}
If the family $\{T_\alpha\}_{\alpha \in I}$ is $\tau$-equi-continuous, it is $\tau$-equi-bounded which implies that there is some $M \geq 0$ such that $\sup_{\alpha \in I} \vn{T_\alpha} \leq M$ by Condition B.
\end{proof}

In particular, we have the following result.

\begin{lemma} \label{lemma:equivalence_equicontinuity}
Let $(X,\tau,\vn{\cdot})$ satisfy Condition B and $\{T(t)\}_{t \geq 0}$ be a semigroup of continuous operators. Then the following are equivalent.
\begin{enumerate}[(a)]
\item $\{T(t)\}_{t \geq 0}$ is locally equi-continuous.
\item For every $t \geq 0$ there exists $M \geq 1$, such that for every $p \in \cN$ there exists $q \in \cN$ such that for all $x \in X$
\begin{equation*}
\sup_{s \leq t} p(T(s)x) \leq M q(x).
\end{equation*}
\end{enumerate}
\end{lemma}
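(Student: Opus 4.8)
The plan is to prove (a) $\Rightarrow$ (b) first, which is essentially trivial, and then to do the real work in (b) $\Rightarrow$ (a), where the countable convexity of $\cN$ is what makes things go. For (a) $\Rightarrow$ (b): fix $t\geq 0$ and a semi-norm $p\in\cN$. Local equi-continuity gives a $\tau$-continuous semi-norm $q_0$ with $\sup_{s\leq t}p(T(s)x)\leq q_0(x)$ for all $x$. Since $\tau$ is weaker than the norm topology (type B, property (b)) and the two topologies have the same bounded sets (property (c)), every $\tau$-continuous semi-norm is dominated by a multiple of $\vn{\cdot}$, as noted in the paragraph before Definition \ref{definition:def_of_N}; write $q_0\leq M_0\vn{\cdot}$. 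Then $q:=M_0^{-1}q_0$ (or rather $q:=\min\{q_0,\vn{\cdot}\}$, or simply $q_0/M_0$) lies in $\cN$ after absorbing the constant, giving $\sup_{s\leq t}p(T(s)x)\leq M q(x)$ with $M=M_0\vee 1$. So (a) $\Rightarrow$ (b) costs nothing.

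For (b) $\Rightarrow$ (a): fix $t\geq 0$ and an arbitrary $\tau$-continuous semi-norm $p$. We must produce a $\tau$-continuous semi-norm $r$ with $\sup_{s\leq t}p(T(s)x)\leq r(x)$ for all $x$. By the domination remark above, $p\leq c\vn{\cdot}$ for some $c>0$, so $\tilde p:=c^{-1}p\in\cN$. Applying (b) to $\tilde p$ yields $M\geq 1$ and some $q\in\cN$ with $\sup_{s\leq t}\tilde p(T(s)x)\leq M q(x)$, hence $\sup_{s\leq t}p(T(s)x)\leq cM\,q(x)$. Now $cM\,q$ is a scalar multiple of a semi-norm in $\cN$, and scalar multiples of $\tau$-continuous semi-norms are $\tau$-continuous; so $r:=cM\,q$ works and we are done — provided that elements of $\cN$ are genuinely $\tau$-continuous, which they are by definition. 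In fact this direction does not even need countable convexity; the single semi-norm $q$ suffices.

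The subtlety — and the place where I expect countable convexity of $\cN$ to actually be needed — is whether (b) should instead be read with a \emph{uniform} choice: for each $t$ one $M$, and then for each $p\in\cN$ a $q\in\cN$. If the intended content were stronger, e.g. handling \emph{countably many} semi-norms $p_n$ at once with a single $q$, then one would take the $q_n\in\cN$ supplied by (b) for each $p_n$ (with the common constant $M$) and form $q:=\sum_n 2^{-n} q_n$, which lies in $\cN$ precisely because $\cN$ is countably convex; this $q$ dominates every $2^{-n}q_n$ and hence controls all the $p_n$ simultaneously. The main obstacle, then, is not any hard estimate but rather pinning down that the two formulations genuinely coincide: one must check that a $\tau$-continuous semi-norm $p$ can always be replaced, up to a constant, by a member of $\cN$ (immediate from property (b) of type B and the boundedness argument), and conversely that bounding $\sup_{s\leq t}p(T(s)\cdot)$ by $Mq(\cdot)$ with $q\in\cN$ really does certify local equi-continuity, since $Mq$ is a perfectly good $\tau$-continuous semi-norm even though it need not lie in $\cN$. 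Once these bookkeeping points are made explicit the equivalence follows, with countable convexity entering only if one wants the "uniform over a sequence of semi-norms" refinement.
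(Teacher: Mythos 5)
Your direction (b) $\Rightarrow$ (a) is fine, as is the observation that countable convexity plays no role in this lemma. The problem is in (a) $\Rightarrow$ (b), where your argument misses the actual content of the statement. The quantifier order in (b) is ``for every $t\geq 0$ there exists $M\geq 1$ such that for every $p\in\cN$ there exists $q\in\cN$'', so $M$ may depend on $t$ only, not on $p$. In your construction $M=M_0\vee 1$, where $M_0$ is the constant dominating the semi-norm $q_0$ that local equi-continuity produces for the particular $p$ you started with; this $M$ varies with $p$. With that weaker quantifier order, (b) is a triviality equivalent to (a) and carries no extra information, whereas the uniform $M$ is exactly what is used afterwards (for instance in Corollary \ref{corollary:exponential_continuity}, where the growth rate $\omega=\log M$ must not depend on the semi-norm).

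The missing idea is the paper's first step: since $\{T(s)\}_{s\leq t}$ is equi-continuous it maps $\tau$-bounded sets to $\tau$-bounded sets, and because $\tau$-bounded and norm-bounded sets coincide (type B, property (c)), this gives a single constant $M\geq 1$ with $\sup_{s\leq t}\vn{T(s)x}\leq M\vn{x}$ for all $x\in X$. Then for any $p\in\cN$ set $q(x):=\tfrac{1}{M}\sup_{s\leq t}p(T(s)x)$; this is a $\tau$-continuous semi-norm by local equi-continuity, and it satisfies $q(x)\leq\tfrac{1}{M}\sup_{s\leq t}\vn{T(s)x}\leq\vn{x}$, so $q\in\cN$ and $\sup_{s\leq t}p(T(s)x)\leq Mq(x)$ with $M$ independent of $p$. (Incidentally, your parenthetical suggestion $q:=\min\{q_0,\vn{\cdot}\}$ need not be a semi-norm, but that is a side issue; the rescaling $q_0/M_0$ is the correct variant, it just does not yield the required uniformity.)
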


As a corollary, we obtain an exponential growth bound.

\begin{corollary} \label{corollary:exponential_continuity}
Let $(X,\tau,\vn{\cdot})$ satisfy Condition B. For a locally equi-continuous semigroup $\{T(t)\}_{t \geq 0}$, there is $M \geq 1$ and $\omega \in \bR$ such that for every $T \geq 0$ and every $p \in \cN$ there is a $q \in \cN$ such that for all $x \in X$
\begin{equation*}
\sup_{t \leq T} e^{-\omega t} p(T(t)x) \leq M q(x).
\end{equation*}
\end{corollary}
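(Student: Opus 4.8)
The plan is to mimic the classical Banach-space proof that a strongly continuous semigroup is exponentially bounded, with the operator norm replaced throughout by the semi-norms in $\cN$, and to repair the inevitable loss of uniformity in $t$ at the very end by passing to a finite maximum of semi-norms.

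First I would fix the constants. Since $\{T(t)\}_{t\ge 0}$ is locally equi-continuous, the argument in the proof of Lemma \ref{lemma:equivalence_equicontinuity} gives $M := \sup_{s \le 1}\vn{T(s)} < \infty$, and $M \ge 1$ because $T(0) = \bONE$; set $\omega := \log M \ge 0$. Lemma \ref{lemma:equivalence_equicontinuity}(b), applied with $t = 1$, then says: for every $p \in \cN$ there is $q \in \cN$ with $\sup_{s \le 1} p(T(s)x) \le M q(x)$ for all $x \in X$, and — this is the point to check carefully — the constant $M$ here does not depend on $p$, being exactly $\sup_{s\le 1}\vn{T(s)}$.

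Next, given $p \in \cN$, I would iterate this: build a sequence $p = p_0, p_1, p_2, \dots$ in $\cN$ such that $\sup_{s \le 1} p_k(T(s)x) \le M\, p_{k+1}(x)$ for every $k$ and every $x$. Writing $t = n + r$ with $n = \lfloor t \rfloor$ and $r \in [0,1)$, the semigroup law $T(t)x = T(r)\bigl(T(1)^n x\bigr)$ together with a short induction on $n$ using the iteration gives $p_0(T(t)x) \le M^{\,n+1} p_{n+1}(x)$. Since $e^{-\omega t} = M^{-t} = M^{-(n+r)}$, this yields
\begin{equation*}
e^{-\omega t} p(T(t)x) \le M^{\,1-r}\, p_{n+1}(x) \le M\, p_{n+1}(x),
\end{equation*}
using $M \ge 1$ and $r \ge 0$. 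To obtain a single $q$ valid for all $t \le T$, observe that then $n \le N := \lfloor T \rfloor$, so only the finitely many semi-norms $p_1, \dots, p_{N+1}$ ever occur; put $q := \max_{1 \le k \le N+1} p_k$. A finite maximum of $\tau$-continuous semi-norms each bounded by $\vn{\cdot}$ is again $\tau$-continuous and bounded by $\vn{\cdot}$, so $q \in \cN$, and $e^{-\omega t} p(T(t)x) \le M\, q(x)$ for all $t \le T$ and all $x$, with $M$ and $\omega$ independent of both $T$ and $p$.

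The one genuinely delicate point — and where a careless version of the argument breaks down — is that the semi-norm $p_{n+1}$ produced in the $n$-th iteration step depends on $n$, hence a priori on $t$, so a uniform bound over $t \in [0,T]$ is not immediate. This forces one to stop the iteration after $N+1$ steps and combine the resulting semi-norms by a \emph{maximum}; combining them by an average would put a factor $N+1$ in front of $q$ and thereby reintroduce a $T$-dependent constant, defeating the purpose. Everything else is routine bookkeeping with the semigroup identity and the definition of $\cN$.
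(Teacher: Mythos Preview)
Your proof is correct and follows essentially the same route as the paper: fix $M=\sup_{s\le 1}\vn{T(s)}$ via Lemma~\ref{lemma:equivalence_equicontinuity}, set $\omega=\log M$, iterate the one-step estimate to obtain $p(T(t)x)\le M^{n+1}p_{n+1}(x)$ for $t=n+r$, and then absorb the $t$-dependence of the resulting semi-norm. The only cosmetic difference is in this last step: the paper arranges from the outset that the iterated sequence is \emph{increasing} in $\cN$ (choosing each $q_{n+1}\ge q_n$), so that for $t\le T$ one may simply take $q=q_{\lceil T\rceil}$; you instead take the finite maximum $q=\max_{1\le k\le N+1}p_k$ afterwards. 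Both devices serve the identical purpose and yield the same bound.
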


\begin{proof}
Pick $M \geq 1$ such that for every $p \in \cN$ there exists $q \in \cN$ such that
\begin{equation} \label{eqn:dominationseminorms}
\sup_{t \leq 1} p(T(t)x) \leq M q(x)
\end{equation}
for every $x \in X$. Without loss of generality, we can always choose $q \in \cN$ to dominate $p$. We use this property to construct an increasing sequence of semi-norms in $\cN$.

Fix some $p \in \cN$ and pick $q_0 \geq p$ such that it satisfies the property in Equation \eqref{eqn:dominationseminorms}. Inductively, let $q_{n+1} \in \cN$ be a semi-norm such that $q_{n+1} \geq q_n$ and $\sup_{t \leq 1} q_{n+1}(T(t)x) \leq M q_n(x)$.
Now let $t \geq 0$. Express $t = s +n$ where $n \in \bN$ and $0 \leq s < 1$, then it follows that
\begin{equation*}
p(T(t)x) \leq M q_0(T(n)) \leq \cdots \leq M^{n+1} q_n(x) \leq M e^{t \log M} q_n(x).
\end{equation*}
Setting $\omega = \log M$, we obtain $\sup_{t \leq T} e^{-\omega t} p(T(t)x) \leq M q_{\lceil T \rceil}(x)$ for every $x \in X$.
\end{proof}

This last result inspires the following definition, which is clearly analogous to the situation for semigroups in Banach spaces.

\begin{definition}
We say that a semigroup on a space $(X,\tau,\vn{\cdot})$ that satisfies Condition B is of type $(M,\omega)$, $M \geq 1$ and $\omega \in \bR$, if for every $p \in \cN$ and $T \geq 0$ there exists $q \in \cN$ such that
\begin{equation*}
\sup_{t \leq T} e^{-\omega t} p(T(t)x) \leq M q(x)
\end{equation*}
for all $x \in X$. We say that it is of type $(M,\omega)^*$ if
\begin{equation*}
\sup_{t \geq 0} e^{-\omega t} p(T(t)x) \leq M q(x).
\end{equation*}

Furthermore, we define the growth bound $\omega_0$ of $\{T(t)\}_{t \geq 0}$ by
\begin{equation*}
\omega_0 := \inf\left\{\omega \in \bR \, \middle| \, \exists M \geq 1 \text{ such that } \{T(t)\}_{t \geq 0} \text{ is of type } (M,\omega) \right\}.
\end{equation*}
\end{definition}

It follows that if a semigroup is of type $(M,\omega)$ for some $M$ and $\omega$, then it is locally equi-continuous. Furthermore, if it is of type $(M,\omega)^*$ it is quasi equi-continuous.

\begin{Condition}[Convexity condition C]
A locally convex space $(X,\tau)$ also equipped with a norm $\vn{\cdot}$, denoted by $(X,\tau,\vn{\cdot})$, satisfies \textit{Condition C} if 
\begin{enumerate}[(a)]
\item $(X,\tau)$ is sequentially complete.
\item $\tau$ is weaker than the norm topology.
\item Both topologies have the same bounded sets.
\item $\cN$ is countably convex.
\end{enumerate}
\end{Condition}

We give some conditions that imply that $\cN$ is countably convex. Interestingly, the same spaces that are strong Mackey, if equipped with a suitable norm, also turn out to satisfy Condition C. 

We say that a space $(X,\tau)$ is \textit{transseparable} if for every open neighbourhood $U$ of $0$, there is a countable subset $A \subseteq X$ such that $A + U = X$. Note that a separable space is transseparable.

\begin{proposition} \label{prop:what_implies_type_C}
Let $(X,\tau)$ be a sequentially complete locally convex space that is also equipped with some norm $\vn{\cdot}$ such that $\tau$ is weaker than the norm topology and such that both topologies have the same bounded sets. The set $\cN$ is countably convex if either of the following hold
\begin{enumerate}[(a)]
\item $\tau^+ = \tau$.
\item $(X,\tau)$ is Mackey and $(X_\tau',\sigma(X_\tau',X))$ is locally complete. 
\item $(X,\tau)$ is transseparable and $(X_\tau',\sigma(X_\tau',X))$ is sequentially complete.
\end{enumerate}
Furthermore, (b) holds for all three classes of spaces mentioned in Proposition \ref{proposition:what_implies_compactequicont}.
\end{proposition}

Note that $\tau^+ = \tau$ is satisfied if $\tau$ is sequential. This holds for example if $(X,\tau)$ is Banach or Fr\'{e}chet. Local completeness of $(X_\tau',\sigma(X_\tau',X))$ is implied by sequential completeness of $(X_\tau',\sigma(X_\tau',X))$ \cite[Corollary 5.1.8]{PCB87}. If $(X_\tau',\sigma(X_\tau',X))$ is locally complete, then $(X,\tau)$ is called \textit{dual locally complete}\cite{SS97}.

\begin{proof}[Proof of Proposition \ref{prop:what_implies_type_C}]
Pick $p_n \in \cN$ and $\alpha_n \geq 0$, such that $\sum_n \alpha_n = 1$. Define $p(\cdot) = \sum_n \alpha_n p_n(\cdot)$. First of all, it is clear that $p$ is a semi-norm. Thus, we need to show that $p$ is $\tau$ continuous. 

\smallskip

Suppose that $\tau^+ = \tau$. By Theorem 7.4 in Wilansky\cite{Wi81} a sequentially continuous semi-norm is continuous. Thus it is enough to show sequential continuity of $p$. This follows directly from the dominated convergence theorem, as every $p_n$ is continuous and $p_n(\cdot) \leq \vn{\cdot}$.

\smallskip

For the proof of  (b) and (c), we need the explicit form of the semi-norms in $\cN$ given in Theorem \ref{theorem:bipolar}. Recall that $B_\tau := \{x' \in (X,\tau)' \, | \, \vn{x'}' \leq 1\}$. If $q \in \cN$, then there is an absolutely convex closed and equi-continuous set $\fS \subseteq B$ such that 
\begin{equation*}
q(\cdot) = \sup_{x' \in \fS} |\ip{\cdot}{x'}|.
\end{equation*}

We proceed with the proof of (b). The sequence of semi-norms $p_n$ are all of the type described above. So let $\fS_n$ be the equi-continuous subset of  $B_\tau$ that corresponds to $p_n$. Define the set
\begin{equation*}
\fS := \left\{\lim_{n \rightarrow \infty} \sum_{i=1}^n \alpha_i u_i \, \middle| \, u_i \in \fS_i\right\}.
\end{equation*}
The dual local completeness of $(E,\tau)$ shows that these limits exists by Theorem 2.3 in \cite{SS97}. Under the stronger assumption that $(X_\tau',\sigma(X_\tau',X))$ is sequentially complete this is obvious.

To finish the proof of case (b), we prove two statements. The first one is that $p(x) = \sup_{x' \in \fS} |\ip{x}{x'}|$, the second is that $\fS$ is $\tau$ equi-continuous. Together these statements imply that $p$ is $\tau$ continuous.

\smallskip

We start with the first statement. For every $x \in X$, there are $x'_n \in \fS_n$ such that $p_n(x) = \ip{x}{x'_n}$ by construction. Therefore,
\begin{equation*}
p(x) = \sum_{n=1}^\infty \alpha_n \ip{x}{x_n'}  = \ip{x}{\sum_{n=1}^\infty \alpha_n x_n'} = \sup_{x' \in \fS} |\ip{x}{x'}|.
\end{equation*}
On the other hand,
\begin{equation*}
\sup_{y' \in \fS} |\ip{x}{y'}| = \sup_{\substack{y_n' \in \fS_n \\ n \geq 1}} |\ip{x}{\sum_{n=1}^\infty \alpha_n y_n'}| \leq \sum_{n=1}^\infty \alpha_n \sup_{y_n' \in \fS_n} |\ip{x}{y_n'}| \leq p(x).
\end{equation*}
Combining these statements, we see that $p(x) = \sup_{x' \in \fS} |\ip{x}{x'}|$. 

\smallskip

We prove the equi-continuity of $\fS$. Consider $\fS_n$ equipped with the restriction of the $\sigma(X_\tau',X)$ topology. Define the product space $\cP := \prod_{n=1}^\infty \fS_n$ and equip it with the product topology. As every closed equi-continuous set is $\sigma(X_\tau',X)$ compact by the Bourbaki-Alaoglu theorem \cite[20.9.(4)]{Ko69}, $\cP$ is also compact.

Let $\phi : \cP \rightarrow \fS$ be the map defined by $\phi(\{x_n'\}_{n \geq 1}) = \sum_{n \geq 1} \alpha_n x_n'$.  Clearly, $\phi$ is surjective. We prove that $\phi$ is continuous. Let $\beta \mapsto \{x'_{\beta,n}\}_{n \geq 1}$ be a net converging to $\{x'_n\}_{n\geq 1}$ in $\cP$.  Fix $\varepsilon > 0$ and $f \in X$. Now let $N$ be large enough such that $\sum_{n > N} \alpha_n < \frac{1}{4 \vn{f}}\varepsilon$ and pick $\beta_0$ such that for every $\beta \geq \beta_0$ we have $\sum_{n \leq N} |\ip{f}{x'_{\beta,n} - x'_n}| \leq \frac{1}{2} \varepsilon$. Then, it follows for $\beta \geq \beta_0$ that
\begin{align*}
& \left|\phi(\{x'_{\beta,n}\}_{n \geq 1}) - \phi(\{x'_n\}_{n \geq 1})\right| \\
& \leq \sum_{n \leq N} \alpha_n|\ip{f}{x'_{\beta,n} - x'_n}| + \sum_{n > N} \alpha_n|\ip{f}{x'_{\beta,n} - x'_n}| \\
& \leq \frac{1}{2} \varepsilon + \sum_{n > N} \alpha_n \vn{f}\vn{x'_{\beta,n} - x'_n}' \\
& \leq \frac{1}{2} \varepsilon + 2\vn{f} \frac{1}{4\vn{f}} \varepsilon \\
& = \varepsilon,
\end{align*}
where we use in line four that all $x_{\beta,n}'$ and $x_n'$ are elements of $B_\tau$. As a consequence, $\fS$, as the continuous image of a compact set, is $\sigma(X_\tau',X)$ compact. $\fS$ is also absolutely convex, as it is the image under an affine map of an absolutely convex set. As $(X,\tau)$ is Mackey, this yields that $\fS$ is equi-continuous, which in turn implies that $p$ is $\tau$ continuous. 

\smallskip

The proof of (c) follows along the lines of the proof of (b). The proof changes slightly as we can not use that a $\sigma(X_\tau',X)$ compact set is equi-continuous. We replace this by using transseparability. We adapt the proof of (b).

As $(X,\tau)$ is transseparable, the $\sigma(X_\tau',X)$ topology restricted to $\fS_n$ is metrisable by Lemma 1 in Pfister\cite{Pf76}. This implies that the product space $\cP := \prod_{n=1}^\infty \fS_n$ with the product topology $\cT$ is metrisable.

By 34.11.(2) in K\"{o}the \cite{Ko79}, we obtain that $\fS$, as the continuous image of a metrisable compact set, is metrisable. The equi-continuity of $\fS$ now follows from corollaries of Kalton's closed graph theorem, see Theorem 2.4 and Theorem 2.6 in Kalton \cite{Ka71} or  34.11.(6) and 34.11.(9) in \cite{Ko79}.

\smallskip

We show that that the spaces mentioned in Proposition \ref{proposition:what_implies_compactequicont} satisfy (b). If $(X,\tau)$ is Mackey and $X' = X^+$, then $\tau^+ = \tau$ by Theorem 7.4 and Corollary 7.5 in \cite{Wi81}. A sequentially complete bornological space is barrelled, so to complete the proof, we only need to consider barrelled spaces. The topology of a barrelled space coincides with the strong topology, therefore a weak Cauchy sequence in $X_\tau'$ is equi-continuous and thus has a weak limit \cite[Proposition 32.4]{Tr67}.
\end{proof}

\section{Infinitesimal properties of semigroups} \label{section:infinitesimal_properties}

We now start with studying the infinitesimal properties of a semigroup. Besides the local equi-continuity which we assumed for all results in previous section, we will now also assume strong continuity.

We directly state the following weaker form of Proposition \ref{proposition:equivalences_strong_continuity} for later reference.

\begin{lemma} \label{lemma:equivalences_strong_continuity_2}
Let $\{T(t)\}_{t \geq 0}$ be a locally equi-continuous semigroup on a locally convex space $(X,\tau)$. Then the following are equivalent.
\begin{enumerate}[(a)]
\item $\{T(t)\}_{t \geq 0}$ is strongly continuous.
\item There is a dense subset $D \subseteq X$ such that $\lim_{t \downarrow 0} T(t)x = x$ for all $x \in X$.
\end{enumerate}
\end{lemma}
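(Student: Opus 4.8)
The direction (a) $\Rightarrow$ (b) is immediate: strong continuity gives $\lim_{t\downarrow 0}T(t)x = T(0)x = x$ for every $x \in X$, so we may take $D = X$. The real content is (b) $\Rightarrow$ (a), and here I would essentially replay the argument from the proof of Proposition \ref{proposition:equivalences_strong_continuity}, since that proof only used local equi-continuity together with right-continuity-at-$0$ on a dense set — it never invoked the type A hypothesis except through Lemma \ref{lemma:strongly_cont_implies_equicont}, which we are now bypassing by assuming local equi-continuity outright.

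\textbf{Step 1: upgrade density of $D$ to $\lim_{t\downarrow 0}T(t)x = x$ for \emph{all} $x \in X$.} Fix $x \in X$, a continuous semi-norm $p$, and $\varepsilon > 0$. Pick a net $x_\alpha$ in $D$ with $x_\alpha \to x$. By local equi-continuity, choose a continuous semi-norm $q$ with $\sup_{s \le 1} p(T(s)y) \le q(y)$ for all $y$; then for $t \le 1$,
\begin{equation*}
p(T(t)x - x) \le p(T(t)(x - x_\alpha)) + p(T(t)x_\alpha - x_\alpha) + p(x_\alpha - x) \le q(x - x_\alpha) + p(T(t)x_\alpha - x_\alpha) + p(x_\alpha - x).
\end{equation*}
Fix $\alpha$ large enough that the first and third terms are each below $\varepsilon/3$ (possible, uniformly in $t \le 1$, since $q$ and $p$ are fixed semi-norms and $x_\alpha \to x$); then since $x_\alpha \in D$, shrink $t$ so the middle term is below $\varepsilon/3$. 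Hence $\lim_{t\downarrow 0}T(t)x = x$.

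\textbf{Step 2: from right-continuity at $0$ to full strong continuity.} For $t > 0$ and $h > 0$, write $T(t+h)x - T(t)x = T(t)(T(h)x - x)$; applying local equi-continuity on $[0,t+1]$ and Step 1 gives $\lim_{h\downarrow 0}T(t+h)x = T(t)x$, i.e. right continuity at every $t$. For left continuity, $T(t-h)x - T(t)x = T(t-h)(x - T(h)x)$ for $0 < h < t$; choosing a continuous semi-norm $q$ with $\sup_{s \le t}p(T(s)y) \le q(y)$ yields $p(T(t-h)x - T(t)x) \le q(x - T(h)x) \to 0$ as $h \downarrow 0$ by Step 1. Combining, $t \mapsto T(t)x$ is continuous, so the semigroup is strongly continuous.

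I do not expect a genuine obstacle here — the statement is deliberately a mild restatement of Proposition \ref{proposition:equivalences_strong_continuity} with local equi-continuity promoted to a hypothesis rather than a conclusion, so the only thing to be careful about is that every use of the type A property in the earlier proof has indeed been routed through local equi-continuity and nowhere else; a quick audit of that proof confirms this. One could even simply cite Proposition \ref{proposition:equivalences_strong_continuity}'s argument verbatim, noting that condition (i) there is exactly (b) here and condition (ii) there is now an assumption.
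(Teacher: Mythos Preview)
Your proposal is correct and matches the paper's approach: the paper states this lemma without proof, calling it a ``weak analogue of Proposition \ref{proposition:equivalences_strong_continuity},'' and your write-up is precisely the replay of that proposition's (i)+(ii) $\Rightarrow$ (a) argument with local equi-continuity taken as hypothesis rather than derived via the type A assumption. Your audit that the type A property entered only through Lemma \ref{lemma:strongly_cont_implies_equicont} is exactly the point.
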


The \textit{generator} $(A,\cD(A))$ of a SCLE semigroup $\{T(t)\}_{t \geq 0}$ on a locally convex space $(X,\tau)$ is the linear operator defined by
\begin{equation*}
A x := \lim_{t \downarrow 0} \frac{T(t)x - x}{t}
\end{equation*}
for $x$ in the set
\begin{equation*}
\cD(A) := \left\{ x \in X \, \middle| \, \lim_{t \downarrow 0} \frac{T(t)x - x}{t} \text{ exists} \right\}.
\end{equation*}
We say that $(A,\cD(A))$ is \textit{closed} if $\{(x,Ax) \, | \, x \in \cD(A)\}$ is closed in the product space $X\times X$ with the product topology.

\smallskip

The generator $(A,\cD(A))$ satisfies the following well known properties. The proofs can be found for example as Propositions 1.2, 1.3 and 1.4 in K{\=o}mura\cite{Ko68}.

\begin{lemma} \label{lemma:EN.II.1.3}
Let $(X,\tau)$ be a locally convex space. For the generator $(A,\cD(A))$ of a SCLE semigroup $\{T(t)\}_{t \geq 0}$, we have 
\begin{enumerate}[(a)]
\item $\cD(A)$ is closed and dense in $X$.
\item For $x \in \cD(A)$, we have $T(t)x \in \cD(A)$ for every $t \geq 0$ and $\frac{\dd}{\dd t} T(t) x = T(t)A x = A T(t)x$.
\item For $x \in X$ and $t \geq 0$, we have $\int_0^t T(s)x \dd s \in \cD(A)$.
\item For $t \geq 0$, we have
\begin{align*}
T(t)x - x & = A \int_0^t T(s)x \dd s & & \text{if } x \in X \\
& = \int_0^t T(s)Ax \dd s & & \text{if } x \in \cD(A).
\end{align*}
\end{enumerate}
\end{lemma}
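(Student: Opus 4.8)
The plan is to follow the classical Banach-space arguments (Engel--Nagel, Section II.1), replacing the norm by the family of $\tau$-continuous semi-norms and the norm estimates by local equi-continuity. Throughout, $\int_0^t T(s)x \dd s$ denotes the $X$-valued Riemann integral, which exists because $s \mapsto T(s)x$ is $\tau$-continuous, hence uniformly continuous, on the compact interval $[0,t]$, so its Riemann sums form a $\tau$-Cauchy net and $(X,\tau)$ is sequentially complete. Two observations will be used repeatedly. First, for a continuous semi-norm $p$ and $t \geq 0$, local equi-continuity yields a continuous semi-norm $q$ with $p(T(s)y) \leq q(y)$ for all $s \leq t$ and $y \in X$, hence $p\big(\int_0^t T(s)y \dd s\big) \leq t\, q(y)$ and, for a net $y_\alpha \to 0$, $T(s)y_\alpha \to 0$ uniformly in $s \leq t$. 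Second, for every $x' \in X'$ the scalar function $s \mapsto \ip{T(s)x}{x'}$ is continuous, so ordinary calculus applies to it.

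I would prove (c) and the first identity of (d) together. For $x \in X$, $t \geq 0$ and $h > 0$, the substitution $s \mapsto s+h$ and cancellation give
\begin{equation*}
\frac{T(h) - \bONE}{h}\int_0^t T(s)x \dd s = \frac{1}{h}\int_t^{t+h} T(s)x \dd s - \frac{1}{h}\int_0^h T(s)x \dd s .
\end{equation*}
As $h \downarrow 0$, strong continuity of the semigroup (and the semi-norm estimate $p\big(\tfrac1h\int_a^{a+h}(T(s)x - T(a)x)\dd s\big) \leq \sup_{s \in [a,a+h]} p(T(s)x - T(a)x)$) shows the right-hand side converges to $T(t)x - x$. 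Hence $\int_0^t T(s)x \dd s \in \cD(A)$ and $A \int_0^t T(s)x \dd s = T(t)x - x$.

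Next I would do (b). For $x \in \cD(A)$ and $h > 0$, $\frac{T(h)-\bONE}{h}T(t)x = T(t)\frac{T(h)-\bONE}{h}x \to T(t)Ax$ by continuity of $T(t)$, so $T(t)x \in \cD(A)$, $AT(t)x = T(t)Ax$, and the right-hand derivative of $t \mapsto T(t)x$ is $T(t)Ax$. For the left derivative at $t$, for $0 < h < t$ write
\begin{equation*}
\frac{T(t)x - T(t-h)x}{h} - T(t)Ax = T(t-h)\!\left(\frac{T(h)x - x}{h} - Ax\right) + \big(T(t-h)Ax - T(t)Ax\big);
\end{equation*}
the last bracket tends to $0$ by strong continuity, and the first term tends to $0$ since $\frac{T(h)x-x}{h} - Ax \to 0$ and $\{T(s)\}_{s \leq t}$ carries this null net to a null net uniformly in $h$. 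Thus $\frac{\dd}{\dd t}T(t)x = T(t)Ax = AT(t)x$. For the second identity in (d), test against $x' \in X'$: $\ip{\int_0^t T(s)Ax \dd s}{x'} = \int_0^t \frac{\dd}{\dd s}\ip{T(s)x}{x'}\dd s = \ip{T(t)x - x}{x'}$; since $X'$ separates points, $\int_0^t T(s)Ax \dd s = T(t)x - x$, which combined with the first line of (d) also gives $A\int_0^t T(s)x \dd s = \int_0^t T(s)Ax \dd s$ for $x \in \cD(A)$.

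Finally (a). Density: by (c), $\frac{1}{t}\int_0^t T(s)x \dd s \in \cD(A)$, and it converges to $x$ as $t \downarrow 0$ by strong continuity. Closedness of the graph: let $(x_\alpha, Ax_\alpha) \to (x,y)$ in $X \times X$; by (d), $T(t)x_\alpha - x_\alpha = \int_0^t T(s)Ax_\alpha \dd s$, the left side converging to $T(t)x - x$ by continuity of $T(t)$, while for any continuous $p$ and corresponding $q$, $p\big(\int_0^t T(s)(Ax_\alpha - y)\dd s\big) \leq t\, q(Ax_\alpha - y) \to 0$; hence $T(t)x - x = \int_0^t T(s)y \dd s$, and dividing by $t$ and letting $t \downarrow 0$ gives $x \in \cD(A)$, $Ax = y$. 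I expect the main obstacle to be the rigorous handling of the $X$-valued integral and the fundamental-theorem-of-calculus steps: norm estimates are unavailable, so interchanging limits or derivatives with the integral must be justified either by pairing with $X'$ and invoking scalar calculus or by the semi-norm bounds $p(\int_0^t T(s)y \dd s) \leq t\, q(y)$ furnished by local equi-continuity; once this bookkeeping is in place, the rest is a routine transcription of the Banach-space proof.
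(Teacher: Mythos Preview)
Your proof is correct and follows the standard line of argument. Note that the paper does not actually supply its own proof of this lemma: it simply refers to Propositions~1.2--1.4 in K{\=o}mura~\cite{Ko68}, where precisely the classical Banach-space computations (as in Engel--Nagel, Section~II.1) are transcribed to the locally convex setting using local equi-continuity in place of norm bounds. Your write-up reproduces exactly that argument, so there is nothing to compare.

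One remark on hypotheses: the lemma as stated only says ``let $(X,\tau)$ be a locally convex space'', yet you (correctly) invoke sequential completeness to guarantee existence of the $X$-valued Riemann integral $\int_0^t T(s)x\,\dd s$. This assumption is tacit in the paper---the sentence immediately following the lemma notes that the integral makes sense ``due to the strong continuity and the local equi-continuity'', and all spaces considered in the paper (types~A and~B) are sequentially complete by definition---but strictly speaking it is an additional hypothesis not displayed in the lemma. It is worth flagging this explicitly, as you did.
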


The integral in (d) should be understood as a $\tau$ Riemann integral. This is possible due to the strong continuity and the local-equi continuity of the semigroup.

Define the \textit{spectrum} of $(A,\cD(A))$ by $\sigma(A) := \{\lambda \in \bC \, | \, \lambda - A \text{ is not bijective}\}$, the \textit{resolvent set} $\rho(A) = \bC \setminus \sigma(A)$, for $\lambda \in \rho(A)$ the \textit{resolvent} $R(\lambda,A) = (\lambda - A)^{-1}$.

\begin{remark}
We will not touch on the subject in this paper, but sequential completeness implies that multiplication in the locally convex algebra of operators on $X$ is bounded. This is enough to develop spectral theory, for references see Section 40.5 in K\"{o}the \cite{Ko79}.
\end{remark}

\begin{proposition} \label{proposition:existence_resolvent}
Let $(X,\tau,\vn{\cdot})$ satisfy Condition C. Let $\{T(t)\}_{t \geq 0}$ be a SCLE semigroup with growth bound $\omega_0$.
\begin{enumerate}[(a)]
\item If $\lambda \in \bC$ is such that the improper Riemann-integral
\begin{equation*}
R(\lambda)x := \int_0^\infty e^{-\lambda t} T(t)x \dd t
\end{equation*}
exists for every $x \in X$, then $\lambda \in \rho(A)$ and $R(\lambda,A) = R(\lambda)$. 
\item Suppose that the semigroup is of type $(M,\omega)$. We have for every $\lambda \in \bC$ such that $\re \lambda > \omega$ and $x \in X$ that
\begin{equation*}
R(\lambda)x := \int_0^\infty e^{- \lambda t} T(t) x \, \dd t
\end{equation*}
exists as an improper Riemann integral. Furthermore, $\lambda \in \rho(A)$.
\item If $\re \lambda > \omega_0$, then $\lambda \in \rho(A)$.
\end{enumerate}
\end{proposition}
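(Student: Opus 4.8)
The plan is to prove the three parts in order, using the earlier results as black boxes. For part (a), I would proceed exactly as in the Banach space case: suppose $R(\lambda)x := \int_0^\infty e^{-\lambda t}T(t)x\,\dd t$ exists for every $x\in X$. I would first check $R(\lambda)$ is a $\tau$-continuous linear operator — linearity is clear, and $\tau$-continuity follows since for any $\tau$-continuous semi-norm $p$ in $\cN$ (it suffices to check these by type B), Corollary \ref{corollary:exponential_continuity} gives $q\in\cN$ and constants so that $p(e^{-\lambda t}T(t)x)\le e^{(\omega-\re\lambda)t}Mq(x)$ is integrable, hence $p(R(\lambda)x)$ is bounded by a constant times $q(x)$ provided the integral converges absolutely; one must be slightly careful here since only improper Riemann convergence is assumed, but the standard semigroup-shift trick handles it. Then I would show $(\lambda-A)R(\lambda)x = x$ for all $x$ and $R(\lambda)(\lambda-A)x = x$ for $x\in\cD(A)$ by the usual computation: using Lemma \ref{lemma:EN.II.1.3}(c)–(d) together with the identity $\frac{1}{h}(T(h)-\bONE)R(\lambda)x \to \lambda R(\lambda)x - x$ as $h\downarrow 0$, which shows $R(\lambda)x\in\cD(A)$ and $AR(\lambda)x = \lambda R(\lambda)x - x$. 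The other identity follows from $R(\lambda)Ax = AR(\lambda)x$ on $\cD(A)$, which in turn uses that $R(\lambda)$ commutes with $T(t)$ and hence with $A$. This gives $\lambda\in\rho(A)$ and $R(\lambda,A)=R(\lambda)$.

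For part (b), the task reduces to showing the improper Riemann integral $\int_0^\infty e^{-\lambda t}T(t)x\,\dd t$ converges when $\re\lambda>\omega$ and the semigroup is of type $(M,\omega)$; then part (a) applies. Convergence of the improper Riemann integral means: for each $x$, the partial integrals $\int_0^N e^{-\lambda t}T(t)x\,\dd t$ (which exist as $\tau$-Riemann integrals by strong continuity and local equi-continuity) form a $\tau$-convergent net as $N\to\infty$. By sequential completeness it is enough that they are $\tau$-Cauchy; for $N<N'$ and any $p\in\cN$, $p\bigl(\int_N^{N'}e^{-\lambda t}T(t)x\,\dd t\bigr)\le\int_N^{N'}e^{-\re\lambda\,t}p(T(t)x)\,\dd t\le Mq(x)\int_N^{N'}e^{(\omega-\re\lambda)t}\,\dd t$ using the type $(M,\omega)$ bound with $T\geq N'$, and the last integral tends to $0$ as $N\to\infty$ since $\omega-\re\lambda<0$. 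So the net is Cauchy, hence convergent, and part (a) yields $\lambda\in\rho(A)$.

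For part (c), given $\re\lambda>\omega_0$, by definition of the growth bound $\omega_0$ there exist $M\ge 1$ and $\omega$ with $\omega_0\le\omega<\re\lambda$ such that the semigroup is of type $(M,\omega)$; then part (b) applies directly to give $\lambda\in\rho(A)$. One small point: the infimum defining $\omega_0$ might not be attained, but since $\re\lambda > \omega_0$ strictly, one can always pick an admissible $\omega$ in the open interval $(\omega_0,\re\lambda)$, which is all that's needed. The main obstacle in the whole argument is the bookkeeping around \emph{improper} Riemann integrals in a non-normed setting — ensuring the partial integrals genuinely exist as $\tau$-valued Riemann integrals (this needs strong continuity plus local equi-continuity so that Riemann sums converge, as noted after Lemma \ref{lemma:EN.II.1.3}), that the semi-norm estimates pass through the limit correctly, and that $\tau$-Cauchy nets of this type converge (sequential completeness suffices here because the partial integrals over $[0,N]$ can be reindexed by $N\in\bN$). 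Once those foundational facts are in place, the algebra is identical to the classical proof.
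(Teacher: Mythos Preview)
Your plan is correct and matches the paper's approach almost exactly: the paper calls (a) ``standard'', proves (b) by the same Cauchy-sequence estimate on the partial integrals $R_n(\lambda)x=\int_0^n e^{-\lambda t}T(t)x\,\dd t$, and derives (c) from (a) and (b).

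Two small remarks. First, in (a) you spend effort on $\tau$-continuity of $R(\lambda)$, but note that the paper distinguishes $\rho(A)$ (mere bijectivity of $\lambda-A$) from $\rho_\tau(A)$; part (a) only claims $\lambda\in\rho(A)$, so continuity is not needed here and is in fact not available under the bare hypothesis that the improper integral converges---continuity is established separately in Theorem \ref{theorem:EN.II.1.10}. Second, in your estimate for (b) you write $p\bigl(\int_N^{N'}\cdots\bigr)\le Mq(x)\int_N^{N'}e^{(\omega-\re\lambda)t}\,\dd t$ ``using the type $(M,\omega)$ bound with $T\ge N'$''. Since that $q$ depends on $N'$, you cannot conclude the Cauchy property from the integral alone tending to zero; you must also use $q(x)\le\vn{x}$ (which holds because $q\in\cN$) to get a bound uniform in $N'$. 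The paper makes exactly this move, replacing $q(x)$ by $\vn{x}$ in the final line of the estimate.
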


\begin{proof}
The proof of the first item is standard. We give the proof of (b) for completeness. Let $\lambda$ be such that $\re \lambda > \omega$. First, for every $a >0$ the integral $R_a(\lambda)x := \int_0^a e^{- \lambda t} T(t) x \dd t$ exists as a $\tau$ Riemann integral by the local equi-continuity of $\{T(t)\}_{t \geq 0}$ and the sequential completeness of $(X,\tau)$.

\smallskip

The sequence $n \mapsto R_n(\lambda)x$ is a $\tau$ Cauchy sequence for every $x \in X$, because for every semi-norm $p \in \cN$ and $m > n \in \bN$ there exists a semi-norm $q \in \cN$ such that
\begin{align*}
p\left(R_m(\lambda)x - R_n(\lambda)x \right) & \leq p\left(\int_n^m e^{-t \lambda} T(t) x \dd t\right) \\
& \leq p\left(\int_n^m e^{-t (\lambda - \omega)} e^{- \omega t} T(t) x \dd t\right) \\
& \leq M q(x) \int_n^m e^{- t (\re \lambda - \omega)}  \dd t \\
& \leq M \vn{x} \frac{e^{-\lambda m} - e^{- \lambda n}}{\re \lambda - \omega}.
\end{align*}
Therefore, $n \mapsto R_n(\lambda)x$ converges by the sequential completeness of $(X,\tau)$. (c) follows directly from (a) and (b).
\end{proof}

We have shown that if $\re \lambda > \omega_0$, then $\lambda \in \rho(A)$. We can say a lot more. 

\begin{theorem} \label{theorem:EN.II.1.10}
Let $(X,\tau,\vn{\cdot})$ satisfy Condition C. Let $\{T(t)\}_{t \geq 0}$ be a SCLE semigroup of growth bound $\omega_0$. For $\lambda  > \omega_0$, $R(\lambda,A)$ is a $\tau$ continuous linear map. Furthermore, if $\{T(t)\}_{t \geq 0}$ is of type $(M,\omega)$, then there exists for every $\lambda_0 > \omega_0$ and semi-norm $p \in \cN$ a semi-norm $q \in \cN$ such that
\begin{equation} \label{eqn:control_on_resolvents}
\sup_{\re \lambda \geq \lambda_0} \sup_{n \geq 0} \; (\re \lambda - \omega)^n p\left(\left(nR(n\lambda)\right)^n x\right) \leq M q(x)
\end{equation} 
for every $x \in X$. If $\{T(t)\}_{t \geq 0}$ is of type $(M,\omega)^*$, then the last statement can be strengthened to
\begin{equation*}
\sup_{\re \lambda > \omega} \sup_{n \geq 0} \; (\re \lambda - \omega)^n p\left(\left(nR(n\lambda)\right)^n x\right) \leq M q(x).
\end{equation*} 
\end{theorem}

For the proof of the theorem, we will make use of Chernoff's bound and the probabilistic concept of stochastic domination. A short explanation and some basic results are given in Appendix \ref{section:appendix_orderings}.

\begin{proof}
In the proof, we will write $\lceil s \rceil$ for the smallest integer $n \geq s$. Clearly, the $\tau$ continuity of $R(\lambda,A)$ follows directly from the result in Equation \eqref{eqn:control_on_resolvents}, so we will start to prove \eqref{eqn:control_on_resolvents}. Without loss of generality, we can rescale and prove the result for a semigroup of type $(M,0)$. 

\smallskip

Let $\lambda_0 > 0$. Fix some semi-norm $p \in \cN$. By the local equi-continuity of $\{T(t)\}_{t \geq 0}$, we can find semi-norms $q_n \in \cN$, increasing in $n$, such that $\sup_{s \leq n} p(T(s)x) \leq M q_n(x)$. 

By iterating the representation of the resolvent given in Proposition \ref{proposition:existence_resolvent}, we see
\begin{equation*}
\left(n \re \lambda  R(n\lambda)\right)^n x = \int_0^\infty \frac{(n\re \lambda)^n s^{n-1}}{(n-1)!} e^{- s n \lambda} T(s)x \dd s,
\end{equation*}
which implies
\begin{equation*}
p\left(\left(n \re \lambda  R(n\lambda)\right)^n x\right) \leq M \int_0^\infty \frac{(n\re \lambda)^n s^{n-1}}{(n-1)!} e^{- sn \re \lambda} q_{\lceil s \rceil}(x) \dd s
\end{equation*}
for every $x \in X$. On the right hand side, we have the semi-norm 
\begin{equation*}
q_{n,\re \lambda} := \int_0^\infty \frac{(n\re \lambda)^n s^{n-1}}{(n-1)!}  e^{- sn \re \lambda} q_{\lceil s \rceil} \dd s 
\end{equation*}
in $\cN$ by the countable convexity of $\cN$ and the fact that we integrate with respect to a probability measure. We denote this measure on $[0,\infty)$ by 
\begin{equation*}
\mu_{n,\re \lambda}(\dd s) = \frac{(n\re \lambda)^n s^{n-1}}{(n-1)!}  e^{- sn \re \lambda} \dd s,
\end{equation*}
and with $B_{n,\re \lambda}$ a random variable with this distribution. As a consequence, we have the following equivalent definitions: 
\begin{equation*}
q_{n,\re \lambda} = \int_0^\infty q_{\lceil s \rceil} \mu_{n, \re \lambda}(\dd s) = \bE\left[ q_{ \lceil B_{n,\re \lambda} \rceil} \right].
\end{equation*}

To show equi-continuity of $(n \re \lambda)^n R(\lambda n)^n$, we need to find one semi-norm $q \in \cN$ that dominates all $q_{n, \re \lambda}$ for $n \geq 0$ and $\re \lambda \geq \lambda_0$. Because $s \mapsto q_{\lceil s\rceil}(x)$ is an increasing and bounded function for every $x \in X$, the result follows from Lemma \ref{lemma:appendix_stochastic_domination_equivalence}, if we can find a random variable $Y$ that stochastically dominates all $B_{n, \re \lambda}$.

In other words, we need to find a random variable that dominates the tail of the distribution of all $B_{n, \re \lambda}$. To study the tails, we use Chernoff's bound, Proposition \ref{proposition:Chernoff}. 

\smallskip

Let $g(s,\alpha,\beta) := \frac{\beta^\alpha s^{\alpha - 1}}{\Gamma(\alpha)} e^{-\beta s}$, $s \geq 0$, $\alpha,\beta > 0$ be the density with respect to the Lebesgue measure of a $\textit{Gamma}(\alpha,\beta)$ random variable. Thus, we see that $B_{n, \re \lambda}$ has a $\textit{Gamma}(n,n \re \lambda)$ distribution. A $\textit{Gamma}(n,n \re \lambda)$ random variable, can be obtained as the $n$-fold convolution of $\textit{Gamma}(1,n \re \lambda)$ random variables, i.e. exponential random variables with parameter $n \re \lambda$. Probabilistically, this means that a $\textit{Gamma}(n,n \re \lambda)$ can be written as the sum of $n$ independent exponential random variables with parameter $n \re \lambda$.
An exponential random variable $\eta$ that is $\textit{Exp}(\beta)$ distributed has the property that $\frac{1}{n} \eta$ is $\textit{Exp}(n \beta)$ distributed. Therefore, we obtain that $B_{n, \re \lambda} = \frac{1}{n} \sum_{i = 1}^n X_{i, \re \lambda}$ where $\{X_{i,\beta}\}_{i \geq 1}$ are independent copies of an $\textit{Exp}(\beta)$ random variable $X_\beta$.

\smallskip

This implies that we are in a position to use a Chernoff bound to control the tail probabilities of the $B_{n, \re \lambda}$. An elementary calculation shows that for $0 < \theta < (\re \lambda)$, we have $\bE[e^{\theta X_{\re \lambda}}] = \frac{\re \lambda}{\re \lambda - \theta}$. Evaluating the infimum in Chernoff's bound yields for $c \geq (\re \lambda)^{-1}$ that
\begin{equation*}
\PR[B_{n, \re \lambda} > c] < e^{-n \left(c\re \lambda - 1 - \log c \re \lambda \right)}.
\end{equation*}

Define the non-negative function 
\begin{gather*}
\phi : [\lambda_0^{-1},\infty)\times [\lambda_0,\infty)  \rightarrow [0,\infty)  \\
(c,\alpha)  \mapsto c\alpha - 1 - \log c\alpha 
\end{gather*}
so that for $c \geq \lambda_0^{-1}$ and  $\lambda$ such that $\re \lambda \geq \lambda_0$ we have
\begin{equation} \label{eqn:chernoff_exp}
\PR[B_{n, \re \lambda} > c]  < e^{-n\phi(c,\re \lambda)}. 
\end{equation}
We use this result to find a random variable that stochastically dominates all $B_{n,\re \lambda}$ for $n \in \bN$ and $\re \lambda \geq \lambda_0$. Define the random variable $Y$ on $[\lambda_0^{-1},\infty)$ by setting $\PR[Y > c] = \exp\{-\phi(c,\lambda_0)\}$.

First note that for fixed $c \geq \lambda_0^{-1}$, the function $\alpha \mapsto \phi(c,\alpha)$ is increasing. Also note that $\phi \geq 0$. Therefore, it follows by Equation \eqref{eqn:chernoff_exp} that for $\lambda$ such that $\re \lambda \geq \lambda_0$ and $c \geq \lambda_0^{-1}$, we have
\begin{equation*}
\PR[B_{n, \re \lambda} > c]  < e^{-n\phi(c,\re \lambda)}  \leq e^{-\phi(c,\re \lambda)} \leq e^{-\phi(c,\lambda_0)} = \PR[Y > c].
\end{equation*}
For $0 \leq c \leq \lambda_0^{-1}$, $\PR[Y > c] =1$ by definition, so clearly $\PR[B_{n, \re \lambda} > c] \leq \PR[Y > c]$. Combining these two statements gives $Y \succeq B_{n, \re \lambda}$ for $n \geq 1$ and $\lambda$ such that $\re \lambda \geq \lambda_0$. This implies by Lemma \ref{lemma:appendix_stochastic_domination_equivalence} that
\begin{equation*}
p\left(\left(n \re \lambda R(n \lambda)\right)^n x\right) \leq \bE\left[q_{\lceil B_{n, \re \lambda}  \rceil}(x) \right] \leq  \bE\left[q_{\lceil Y  \rceil}(x) \right]  =: q(x)
\end{equation*}
By the countable convexity of $\cN$, $q$ is continuous and in $\cN$, which proves the second statement of the theorem.

\smallskip

The strengthening to the case where the semigroup is of type $(M,\omega)^*$ is obvious, as it is sufficient to consider just one semi-norm $q \in \cN$ for every $p \in \cN$.
\end{proof}

\section{Generation results} \label{section:generation_results}

The goal of this section is to prove a Hille-Yosida result for locally equi-continuous semigroups. First, we start with a basic generation result for the semigroup generated by a continuous linear operator.

\begin{lemma} \label{lemma:generation_by_continuous_operator}
Let $(X,\tau,\vn{\cdot})$ satisfy Condition C. Suppose we have some $\tau$ continuous and linear operator $G : X \rightarrow X$. Then $G$ generates a SCLE semigroup defined by
\begin{equation} \label{equation:definition_St}
S(t)x := \sum_{k \geq 0} \frac{t^k G^k x}{k!}.
\end{equation}
\end{lemma}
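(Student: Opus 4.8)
The plan is to show that the series \eqref{equation:definition_St} converges in $(X,\tau)$, defines a semigroup of continuous operators, is locally equi-continuous, and is strongly continuous with generator $G$. The key structural input is that $(X,\tau,\vn{\cdot})$ is of type B: since $G$ is $\tau$-continuous and $\tau$-continuous semi-norms are dominated by multiples of $\vn{\cdot}$, for each $p\in\cN$ there is some $q\in\cN$ and a constant $c_p\geq 1$ with $p(Gx)\leq c_p\, q(x)$ for all $x$ (after normalising $q$ to dominate $p$). I would first iterate this: starting from $p\in\cN$, build an increasing sequence $q_0\leq q_1\leq q_2\leq\cdots$ in $\cN$ and constants $c_0,c_1,\dots$ such that $p(G^k x)\leq c_0c_1\cdots c_{k-1}\, q_{k-1}(x)$. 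Then
\begin{equation*}
p\!\left(\sum_{k=0}^{m} \frac{t^k G^k x}{k!}\right)\leq \sum_{k=0}^{m}\frac{t^k}{k!}\, c_0\cdots c_{k-1}\, q_{k-1}(x),
\end{equation*}
and the partial sums form a $\tau$-Cauchy sequence provided the scalar series on the right converges, which it does once we organise the bound correctly.

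The main obstacle, and the place where type B is essential, is that the constants $c_k$ and semi-norms $q_k$ produced by iterating continuity of $G$ need not be uniformly controlled, so the naive tail bound $\sum_k \frac{t^k}{k!} c_0\cdots c_{k-1} q_{k-1}(x)$ may involve a divergent-looking product of constants. The fix is the same device used in Corollary \ref{corollary:exponential_continuity} and Theorem \ref{theorem:EN.II.1.10}: absorb the constants into a probability average of the $q_k$'s. Concretely, fix $T\geq 0$; I would choose the $q_k$ (depending on $T$) so that $p(G^k x)\leq M^k\, q_k(x)$ for a single constant $M\geq 1$ — this is possible by rescaling at each step, using that in $\cN$ one may always enlarge $q$ — and then
\begin{equation*}
p\!\left(\sum_{k=0}^{m}\frac{t^k G^k x}{k!}\right)\leq \sum_{k=0}^{m}\frac{(tM)^k}{k!}\, q_k(x)\leq e^{TM}\sum_{k=0}^{m}\frac{(TM)^k e^{-TM}}{k!}\, q_k(x),
\end{equation*}
for $t\leq T$. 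The tail is controlled uniformly in $t\leq T$, and $q:=e^{TM}\sum_{k\geq 0}\frac{(TM)^k e^{-TM}}{k!}\, q_k \in\cN$ by countable convexity of $\cN$ (we are averaging against a Poisson$(TM)$ distribution and multiplying by the constant $e^{TM}$, which keeps it a $\tau$-continuous semi-norm dominated by a multiple of $\vn{\cdot}$, hence in $\cN$ after suitable normalisation — one checks $q(\cdot)\leq\vn{\cdot}$ by the same estimate applied with $p=\vn{\cdot}$-dominating data). This simultaneously proves $\tau$-convergence of the series, continuity of $S(t)$, and the bound $\sup_{t\leq T} p(S(t)x)\leq q(x)$, i.e.\ local equi-continuity.

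With convergence in hand, the semigroup law $S(t)S(s)=S(t+s)$ and $S(0)=\bONE$ follow from the Cauchy product of the absolutely (in the above sense) convergent series, exactly as in the Banach case, using that $G$ is continuous so $G$ commutes with the $\tau$-limit defining $S(s)x$. For strong continuity: for $x\in X$ and $p\in\cN$,
\begin{equation*}
p(S(t)x - x)\leq \sum_{k\geq 1}\frac{t^k}{k!}\, p(G^k x)\leq \sum_{k\geq 1}\frac{(tM)^k}{k!}\, q_k(x)\to 0
\end{equation*}
as $t\downarrow 0$ by dominated convergence against the convergent series (taking $T=1$, say), and combined with local equi-continuity this gives $\tau$-continuity of $t\mapsto S(t)x$ on $[0,\infty)$ by the argument of Lemma \ref{lemma:equivalences_strong_continuity_2} — or one simply notes $S(t+h)x - S(t)x = S(t)(S(h)x - x)$ and uses local equi-continuity to pass the estimate through $S(t)$. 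Finally, that $G$ is the generator follows from
\begin{equation*}
p\!\left(\frac{S(t)x - x}{t} - Gx\right)\leq \sum_{k\geq 2}\frac{t^{k-1}}{k!}\, p(G^k x)\leq \sum_{k\geq 2}\frac{t^{k-1}M^k}{k!}\, q_k(x)\to 0
\end{equation*}
as $t\downarrow 0$, so $\cD(A)=X$ and $A=G$. The only genuinely delicate point is the uniform-constant reorganisation in the first paragraph; everything else is a routine transcription of the exponential-series argument with $\vn{\cdot}$ replaced by semi-norms from $\cN$ and countable convexity doing the bookkeeping.
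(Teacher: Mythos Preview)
Your approach is essentially the paper's: iterate the continuity of $G$ through $\cN$, package the resulting sum as a Poisson-weighted convex combination of semi-norms, and invoke countable convexity to land back in $\cN$. The paper obtains local equi-continuity via stochastic domination of Poisson laws (Lemma~\ref{lemma:appendix_domination_poisson}), whereas you simply bound $t\leq T$ termwise in the series; both are fine.

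There is, however, one genuine gap in your justification. You correctly identify that the iterated constants $c_0,c_1,\dots$ must collapse to a single $M$, but the reason you offer --- ``rescaling at each step, using that in $\cN$ one may always enlarge $q$'' --- does not achieve this: enlarging $q$ cannot shrink the constant in front of it. The actual mechanism, which the paper invokes by reference to the proof of Lemma~\ref{lemma:equivalence_equicontinuity}, is that a $\tau$-continuous operator maps $\tau$-bounded sets to $\tau$-bounded sets, and since type~B forces $\tau$-bounded $=$ norm-bounded, $G$ has finite operator norm $c_G$. Then for \emph{every} $p\in\cN$ the semi-norm $x\mapsto c_G^{-1}p(Gx)$ is $\tau$-continuous and bounded by $\vn{\cdot}$, hence lies in $\cN$; iterating gives $p(G^k x)\leq c_G^{k}\, q_k(x)$ with $q_k\in\cN$ and a \emph{single} constant $c_G$ independent of $p$ and $k$. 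Once this is in place, the rest of your argument goes through. (A minor related point: your Poisson-averaged semi-norm is not ``in $\cN$ after suitable normalisation'' --- the convex combination $\sum_k \frac{(TM)^k e^{-TM}}{k!}q_k$ is already in $\cN$ by countable convexity; the factor $e^{TM}$ stays outside as the bound constant, in line with the definition of type $(M,\omega)$.)
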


\begin{proof}
First, we show that the infinite sum in Equation \eqref{equation:definition_St} is well defined. As $\tau$ is weaker than the norm-topology, it is sufficient  to prove that the sum exists as a norm limit. By Condition C and Lemma \ref{lemma:equivalences_boundedness}, $(X,\vn{\cdot})$ is a Banach space. Therefore, we need to show for some fixed $t \geq 0$ and $x \in X$ that the sequence $y_n = \sum_{k = 0}^n \frac{t^k G^k x}{k!}$ is Cauchy for $\vn{\cdot}$. Note that as $G$ is $\tau$ continuous, it is also norm continuous. Suppose that $n \geq m$, then we have
\begin{align*}
\vn{y_n - y_m} & \leq \sum_{m < k \leq n} \frac{t^k}{k!} \vn{G}^k \vn{x} \\
& \leq \vn{x} \sum_{k > m} \frac{t^k}{k!} \vn{G}^k.
\end{align*}
which can be made arbitrarily small by choosing $m$ large.

\smallskip

We proceed with showing that the $\tau$ continuous operators $S_n(t) : X \rightarrow X$ defined for $x \in X$ by $S_n(t)x := \sum_{k = 0}^n \frac{t^k G^k x}{k!}$ are equi-continuous. As in the proof of Lemma \ref{lemma:equivalence_norm_and_N_bounds}, the fact that $G$ is $\tau$ continuous implies that for every $p \in \cN$, there exists $q \in \cN$ such that $p(Gx) \leq \vn{G} q(x)$ for all $x \in X$. Use this method to construct for a given $p \in \cN$ an increasing sequence of semi-norms $q_n \in \cN$, $q_0 := p$, such that $q_n(Gx) \leq \vn{G} q_{n+1}(x)$ for every $n \geq 0$ and $x \in X$. As a consequence, we obtain
\begin{align*}
p(S_n(t)x) = p \left(\sum_{k = 0}^n \frac{t^k G^k x}{k!} \right) & \leq \sum_{k = 0}^n \frac{t^k}{k!}  p(G^k x)  \\
& \leq \sum_{k \geq 0} \frac{t^k}{k!} p(G^k x) \\
& \leq e^{t \vn{G}}\sum_{k \geq 0} \frac{(\vn{G} t)^k}{k!} e^{-t \vn{G}} q_k(x)  \\
& \leq  e^{t \vn{G}} q_t(x), 
\end{align*}
where 
\begin{equation*}
q_t(x) := \sum_{k \geq 0} \frac{(\vn{G} t)^k}{k!} e^{-t \vn{G}} q_k(x)
\end{equation*}
is a continuous semi-norm in $\cN$ by Condition C (d). The semi-norm $q_t$ is independent of $n$ which implies that $\{S_n(t)\}_{n \geq 1}$ is $\tau$ equi-continuous. It follows that $S(t)$ is $\tau$ continuous: pick a net $x_\alpha$ in $X$ that converges to $x \in X$ with respect to $\tau$. Let $p \in \cN$, then
\begin{align*}
& p\left(S(t)x_\alpha - S(t)x\right) \\
& \quad \leq p\left(S(t)x_\alpha - S_n(t)x_\alpha\right) + p\left(S_n(t)x_\alpha - S_n(t)x\right) + p\left(S_n(t)x - S(t)x_\alpha\right) \\
& \quad \leq p\left(S(t)x_\alpha - S_n(t)x_\alpha\right) + q_t\left(x_\alpha - x\right) + p\left(S_n(t)x - S(t)x_\alpha\right).
\end{align*}
By first choosing $\alpha$, and then $n$ large enough, we see $p\left(S(t)x_\alpha - S(t)x\right) \rightarrow 0$.

By stochastic domination of Poisson random variables, Lemmas \ref{lemma:appendix_stochastic_domination_equivalence} and \ref{lemma:appendix_domination_poisson}, it follows that for $t \leq T$, we have that
\begin{equation*}
\sup_{t \leq T} e^{-t \vn{G}} p(S(t)x) \leq \sup_{t \leq T} q_t(x) = q_T(x).
\end{equation*}
To prove strong continuity, it suffices to check that
$\lim_{t \downarrow 0} S(t)x = x$ for every $x \in X$ by Lemma \ref{lemma:equivalences_strong_continuity_2}.
To that end again consider $p \in \cN$, we see
\begin{equation*}
p(S(t)x - x)  = p \left(\sum_{k \geq 0} \frac{t^k G^k x}{k!} - x \right) \leq \sum_{k \geq 0} \frac{t^k}{k!} p(G^k x - x).
\end{equation*}
Note that the first order term vanishes. Therefore, the Dominated convergence theorem implies that the limit is $0$ as $t \downarrow 0$.
\end{proof}

In the proof of the Hille-Yosida theorem on Banach spaces, the semigroup is constructed as the limit of semigroups generated by the Yosida approximants. In the locally convex context, we need to take special care of equi-continuity of the approximating semigroups. 

Suppose we would like to generate a locally equi-continuous semigroup $e^{tA}$ for some operator operator $(A,\cD(A))$. The next lemma will yield joint local equi-continuity of the semigroups generated by the Yosida approximants by taking $H_n = nR(n,A)$.

\begin{lemma} \label{lemma:joint_local_equi_continuity}
Let $(X,\tau)$ satisfy Condition C. Let $\{H_n\}_{n \geq 1}$ be a family of operators in $\cL(X,\tau)$ such that for every $p \in \cN$ there is $q \in \cN$ such that
\begin{equation} \label{eqn:joint_local_equi_cont_condition}
\sup_{n \geq 1} \sup_{k \leq n} p(H_n^k x) \leq q(x)  
\end{equation}
for all $x \in X$. Then, the semigroups $e^{t(n H_n - n)}$ are jointly locally equi-continuous.
\end{lemma} 

\begin{proof}
By Lemma \ref{lemma:generation_by_continuous_operator}, we can define the semigroups $S_n(t) := e^{t(n H_n - n)}$. We see that
\begin{equation*}
S_n(t)x := \sum_{k \geq 0} \frac{(tn)^k H_n^k x}{k!}e^{-tn},
\end{equation*}
which intuitively corresponds to taking the expectation of $k \mapsto H_n^k x$ under the law of a Poisson random variable with parameter $n$. We exploit this point of view to show equi-continuity of the family $\{S_n(t)\}_{t \leq T, n \geq 1}$ for some arbitrary fixed time $T \geq 0$.

Let $\{Z_\mu\}_{\mu \geq 0}$ be a family of independent random variables, where $Z_\mu$ has a $\textit{Poisson}(\mu)$ distribution. For $t \geq 0$ and $n \geq 1$ let $B_{n,t} := \lceil\frac{Z_{nt}}{n} \rceil$. The random variable $B_{n,t}$ is obtained from $Z_{nt}$ as follows: $0$ is mapped to $0$, and the values $\{nl+k\}_{k=1}^{n}$ are mapped to $l + 1$. 
Fix a semi-norm $p \in \cN$ and use Equation \eqref{eqn:joint_local_equi_cont_condition} to construct an increasing sequence of semi-norms in $\cN$: $q_0 = p, q_1, \dots$ such that every pair $q_l, q_{l+1}$ satisfies the relation in \eqref{eqn:joint_local_equi_cont_condition}. As a consequence, we obtain
\begin{equation} \label{eqn:calculationPoisson}
\begin{aligned}
& p(S_n(t)) \\
& \leq p\left(\sum_{k\geq 0}  \frac{(tn)^k H_n^k x}{k!} e^{-tn}\right) \\
& \leq p(x)e^{-tn} + \sum_{l \geq 0} \sum_{k = 1}^n \frac{(tn)^{nl + k}}{(nl + k)!} e^{-tn} p\left(H_n^{nl + k} x \right) \\
& \leq q_0(x)e^{-tn} +  \sum_{l \geq 0} \sum_{k = 1}^n \frac{(tn)^{nl + k}}{(nl + k)!} e^{-tn} q_{l+1}(x) \\
& = \PR[B_{n,t} = 0]q_0(x) + \sum_{l \geq 0} \PR[B_{n,t} = l+1] q_{l+1}(x) \\
& = \bE\left[q_{B_{n,t}}(x) \right].
\end{aligned}
\end{equation}

We see that, as in the proof of Theorem \ref{theorem:EN.II.1.10}, we are done if we can find a random variable $Y$ that stochastically dominates all $B_{n,t}$ for $n \geq 1$ and $t \leq T$. 

We calculate the tail probabilities of $B_{n,t}$ in the case that $t > 0$. If $t = 0$, all tail probabilities are $0$. By the definition of $B_{n,t}$,
\begin{equation*}
\PR[B_{n,t} > k]  = \PR\left[Z_{nt} > nk\right].
\end{equation*}
As $Z_{nt}$ is $\textit{Poisson}(nt)$ distributed, we can write it as $Z_{nt} = \sum_{i=1}^n X_i$ where $\{X_i\}_{i \geq 0}$ are independent and $\textit{Poisson}(t)$ distributed. This implies that we can apply Chernoff's bound to $\frac{1}{n}Z_{nt}$, see Proposition \ref{proposition:Chernoff}. First of all, for all $\theta \in \bR$, we have $\bE\left[e^{\theta X} \right] =  \exp\{t(e^\theta - 1)\}$. Evaluating the infimum in Chernoff's bound for $k \geq \lceil T \rceil$, $T \geq t$ yields
\begin{equation*}
\PR[B_{n,t} > k] = \PR\left[\frac{1}{n}Z_{nt} > k\right]  < e^{-n\left(k \log \frac{k}{t} - k + t \right)}. 
\end{equation*}

Define the function 
\begin{gather*}
\phi : [\lceil T \rceil,\infty)\times (0,T]  \rightarrow [0,\infty) \\
(a,b)  \mapsto a \log \frac{a}{b} - a + b,
\end{gather*}
so that for $k \geq \lceil T \rceil$, $T \geq t$, we have $\PR[B_{n,t} > k]  < e^{-n\phi(k,t)}$.

We define a new random variable $Y$ taking values in $\{n \in \bN \, | \, n \geq \lceil T \rceil \}$ by putting $\PR[Y = \lceil T \rceil] = 1 - e^{-\phi(\lceil T \rceil, T)}$, and for $k \geq \lceil T \rceil$: $\PR[Y > k] = e^{-\phi(k,T)}$, or stated equivalently $\PR[Y = k + 1] = e^{-\phi(k,T)} -  e^{-\phi(k+1,T)}$.

For $k < \lceil T \rceil$, we have by definition that $\PR[Y > k] \geq \PR[B_{n,t} > k]$ as the probability on the left is $1$. For $k \geq \lceil T \rceil$, an elementary computation shows that for fixed $k$ and $t \leq T$ the function $\phi(k,t)$ is decreasing in $t$. This implies that
\begin{equation*}
\PR[B_{n,t} > k]  \leq e^{-n\phi(k,t)} \leq e^{-\phi(k,t)} \leq e^{-\phi(k,T)}  = \PR[Y > k].
\end{equation*}
In other words, we see $Y \succeq B_{n,t}$ for all $n \geq 1$ and $0 < t \leq T$. For the remaining cases, where $t = 0$, the result is clear as $B_{n,t} = 0$ with probability $1$. By Lemma \ref{lemma:appendix_stochastic_domination_equivalence} and the bound in \eqref{eqn:calculationPoisson}, we obtain that
\begin{equation*}
p(S_n(t)) \leq \bE\left[q_{B_{n,t}}(x) \right]  \leq \bE\left[q_{Y}(x) \right]  =: q(x).
\end{equation*}
The semi-norm $q(x)$ is in $\cN$ by the countable convexity of $\cN$.
We conclude that the family $\{S_n(t)\}_{t \leq T, n \geq 1}$ is equi-continuous.
\end{proof}

\begin{lemma} \label{lemma:resolvent_convergence}
Let $(X,\tau,\vn{\cdot})$ satisfy Condition C. Let $(A,\cD(A))$ be a closed, densely defined operator such that there exists an $\omega \in \bR$ such that $(\omega,\infty) \subseteq \rho(A)$ and such that for every $\lambda_0 > \omega$ and semi-norm $p \in \cN$, there is a continuous semi-norm $q$ such that $\sup_{\lambda \geq \lambda_0} p((\lambda- \omega) R(\lambda)x) \leq q(x)$  for every $x \in X$. As $\lambda \rightarrow \infty$, we have
\begin{enumerate}[(a)]
\item $\lambda R(\lambda)x \rightarrow x$ for every $x \in X$,
\item $\lambda A R(\lambda)x = \lambda R(\lambda) A x \rightarrow Ax$ for every $x \in \cD(A)$.
\end{enumerate}
\end{lemma}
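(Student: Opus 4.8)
The plan is to run the classical Banach-space argument, replacing the norm throughout by semi-norms in $\cN$. A preliminary reduction makes this possible: since any $\tau$-continuous semi-norm $p$ is bounded on the ($\tau$-bounded) $\vn{\cdot}$-unit ball, we have $p \leq M\vn{\cdot}$ for some $M > 0$, so $p/M \in \cN$; hence it suffices to prove both convergences for semi-norms $p \in \cN$, which is exactly the form in which the hypothesis is stated.

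For (a), I would first treat $x \in \cD(A)$. From $R(\lambda)(\lambda - A)x = x$ one obtains the identity $\lambda R(\lambda)x - x = R(\lambda)(\lambda x) - R(\lambda)(\lambda - A)x = R(\lambda)Ax$, so, with $q$ the (continuous) semi-norm furnished by the hypothesis for $p \in \cN$ and some fixed $\lambda_0 > \omega$, for $\lambda \geq \lambda_0$
\[
p(\lambda R(\lambda)x - x) = \frac{1}{\lambda-\omega}\,p\big((\lambda-\omega)R(\lambda)Ax\big) \leq \frac{q(Ax)}{\lambda-\omega} \xrightarrow{\lambda\to\infty} 0 .
\]
To pass to arbitrary $x \in X$, I would choose $\lambda_0$ large enough that also $\lambda/(\lambda-\omega) \leq 2$ for $\lambda \geq \lambda_0$; then, given $\varepsilon > 0$, use density of $\cD(A)$ together with the fact that $p + q$ is a $\tau$-continuous semi-norm to pick $y \in \cD(A)$ with $(p+q)(x-y)$ small, and split
\[
p(\lambda R(\lambda)x - x) \leq \tfrac{\lambda}{\lambda-\omega}\,q(x-y) + p(\lambda R(\lambda)y - y) + p(x-y) ,
\]
bounding the first term via the hypothesis applied to $x-y$, the third term directly, and the middle term by the already established $\cD(A)$ case for $\lambda$ large.

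For (b), I would record from $R(\lambda) = (\lambda - A)^{-1}$ that $R(\lambda)x \in \cD(A)$ and $A R(\lambda)x = \lambda R(\lambda)x - x$ for every $x \in X$, and combine this with the identity $\lambda R(\lambda)x - x = R(\lambda)Ax$ (valid on $\cD(A)$, from the computation in part (a)) to get $\lambda A R(\lambda)x = \lambda R(\lambda)Ax$ for $x \in \cD(A)$; then (b) follows by applying part (a) to the vector $Ax \in X$. I expect no genuine obstacle here — the argument is the standard one — the only points needing attention being the reduction to $p \in \cN$, the uniform bound $\lambda/(\lambda-\omega) \leq 2$ for large $\lambda$, and approximating $x$ in the combined semi-norm $p + q$ so that a single $y \in \cD(A)$ controls both boundary terms at once.
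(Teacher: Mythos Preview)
Your proposal is correct and is exactly the approach the paper has in mind: the paper does not give a proof but simply states that ``the lemma can be proven as in the Banach space case,'' and your argument is precisely that standard argument carried out with semi-norms from $\cN$ in place of the norm. The key steps --- the identity $\lambda R(\lambda)x - x = R(\lambda)Ax$ on $\cD(A)$, the bound $p(R(\lambda)Ax) \leq q(Ax)/(\lambda-\omega)$, the density approximation using the uniform equi-continuity in $\lambda \geq \lambda_0$, and the reduction of (b) to (a) via $A R(\lambda) = R(\lambda)A$ on $\cD(A)$ --- match the intended route.
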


The lemma can be proven as in the Banach space case\cite[Lemma II.3.4]{EN00}. We have now developed enough machinery to prove a Hille-Yosida type theorem which resembles the equivalence between (a) and (b) of Theorem 16 in \cite{Ku03}.

\begin{theorem} \label{theorem:Hille_Yosida}
Let $(X,\tau,\vn{\cdot})$ satisfy Condition C. For a linear operator $(A,\cD(A))$ on $(X,\tau)$, the following are equivalent.
\begin{enumerate}[(a)]
\item $(A,\cD(A))$ generates a SCLE semigroup of type $(M,\omega)$.
\item $(A,\cD(A))$ is closed, densely defined and there exists $\omega \in \bR$ and $M \geq 1$ such that for every $\lambda > \omega$ one has $\lambda \in \rho(A)$ and for every semi-norm $p \in \cN$ and $\lambda_0 > \omega$ there exists a semi-norm $q \in \cN$ such that for all $x \in X$ one has 
\begin{equation}\label{eqn:Hille_Yosida_estimate_not_complex}
\sup_{n \geq 1} \sup_{\lambda \geq \lambda_0} p\left( \left(n (\lambda- \omega) R(n \lambda)\right)^n x \right) \leq M q(x).
\end{equation}
\item $(A,\cD(A))$ is closed, densely defined and there exists $\omega \in \bR$ and $M \geq 1$ such that for every $\lambda \in \bC$ satisfying $\re \lambda > \omega$, one has $\lambda \in \rho(A)$ and for every semi-norm $p \in \cN$ and $\lambda_0 > \omega$ there exists a semi-norm $q \in \cN$ such that for all $x \in X$ and $n \in \bN$ 
\begin{equation*} 
\sup_{n \geq 1}\sup_{\re \lambda \geq \lambda_0} p\left( \left(n(\re \lambda - \omega) R(n \lambda)\right)^n x \right) \leq M q(x).
\end{equation*}
\end{enumerate}
\end{theorem}

By a simplification of the arguments, or arguing as in Section IX.7 in Yosida\cite{Yo78}, we can also give a necessary and sufficient condition for the generation of a quasi equi-continuous semigroup of type $(M,\omega)^*$, which corresponds with the result obtained in Theorem \ref{theorem:EN.II.1.10}. Theorem 3.5 in Kunze \cite{Ku09} states a similar result.

\smallskip

Suppose we have a semigroup of type $(M,\omega)$ and let $\omega' > \omega$. Equation \eqref{eqn:Hille_Yosida_estimate_not_complex} yields
\begin{equation} \label{eqn:bound_resolvent_quasi_equicontinuous}
\sup_{n \geq 1} \sup_{\lambda > \omega'} p\left( \left(n (\lambda - \omega') R(n \lambda)\right)^n x \right) \leq M q(x)
\end{equation}
which implies that the semigroup is of type $(M,\omega')^*$. We state this as a corollary.

\begin{corollary}
Suppose that $(X,\tau,\vn{\cdot})$ satisfies Condition C. If a semigroup is of type $(M,\omega)$, then it is of type $(M,\omega')^*$ for all $\omega' > \omega$. 
\end{corollary}

As Equation \eqref{eqn:Hille_Yosida_estimate_not_complex} implies \eqref{eqn:bound_resolvent_quasi_equicontinuous}, it is sufficient, for the construction of a semigroup, to use the weaker result as in Kunze\cite{Ku09}. However, one obtains that the semigroup is of type $(M,\omega')$ for $\omega' > \omega$, which does not give any control if the semigroup is rescaled by $e^{-\omega t}$. A semigroup that is of type $(M',\omega')$ for all $\omega' > \omega$ is not necessarily of type $(M,\omega)$ for any $M \geq M'$ as is shown in Example I.5.7(ii) in \cite{EN00}.

The proof of the Hille-Yosida theorem stated here, however, gives explicit control on the semigroup rescaled by $e^{-\omega t}$ via the construction in Lemma \ref{lemma:joint_local_equi_continuity} and gives a result as strong as the equivalence of (a) and (b) of Theorem 16 in \cite{Ku03}. 

\begin{proof}[Proof of Theorem \ref{theorem:Hille_Yosida}]
(a) to (c) is the content of Theorem \ref{theorem:EN.II.1.10} and (c) to (b) is clear. So we need to prove (b) to (a).

\smallskip

First note that we can always assume that $\omega = 0$ by a suitable rescaling. We start by proving the result for $\omega = 0$ and $M = 1$. We follow the lines of the proof of the Hille-Yosida theorem for Banach spaces in Engel and Nagel \cite[Theorem II.3.5]{EN00}. 

Define for every $n \in \bN\setminus\{0\}$ the Yosida approximants
\begin{equation*}
A_n := n A R(n) = n^2  R(n) - n \bONE.
\end{equation*}
These operators commute and for every $n$ $A_n$ satisfies the condition in Lemma \ref{lemma:generation_by_continuous_operator}. Furthermore, we can apply Lemma \ref{lemma:joint_local_equi_continuity} to the operators $H_n = n R(n)$. Note that Equation \eqref{eqn:joint_local_equi_cont_condition} is satisfied as a consequence of Equation \eqref{eqn:Hille_Yosida_estimate_not_complex}, as the latter implies
\begin{equation*}
\sup_k \sup_{\lambda \in \{\frac{n}{k} \, | \, n \geq k \}} p\left( (\lambda k R(\lambda k))^k x\right) \leq q(x)
\end{equation*}
for all $x$, which in turn can be rewritten to
\begin{equation*}
\sup_n \sup_{k \leq n} p\left((nR(n))^k x\right) \leq q(x)
\end{equation*}
for all $x \in X$. Hence, we obtain that the operators $A_n$ generate jointly locally equi-continuous strongly continuous commuting semigroups $t \mapsto T_n(t)$ of type $(1,0)$. We show that there exists a limiting semigroup.

Let $x \in \cD(A)$ and $t \geq 0$, the fundamental theorem of calculus applied to $s \mapsto T_m(t-s)T_n(s) x$ for $s \leq t$, yields
\begin{align*}
T_n(t)x - T_m(t)x & = \int_0^t T_m(t-s)\left(A_n - A_m\right)T_n(s) x \dd s \\
& = \int_0^t T_m(t-s)T_n(s) \left(A_n x - A_m x\right) \dd s.
\end{align*}
By Lemma \ref{lemma:joint_local_equi_continuity}, we obtain that for every semi-norm $p \in \cN$ there exists $q \in \cN$ such that
\begin{equation} \label{eqn:uniform_bound_on_difference_Yosida_approximants}
p(T_n(t)x - T_m(t)x) \leq t q(A_n x - A_m x).
\end{equation}
Hence, for $x \in \cD(A)$ the sequence $n \mapsto T_n(s)x$ is $\tau$-Cauchy uniformly for $s \leq t$ by Lemma \ref{lemma:resolvent_convergence} (b). The joint local equi-continuity of $\{T_n(t)\}_{t \geq 0, n \geq 1}$ implies that this property extends to all $x \in X$. 

Define the point-wise limit of this sequence by $T(s)x := \lim_n T_n(s)x$. This directly yields that the family $\{T(s)\}_{s \leq t}$ is equi-continuous, because it is contained in the closure of an equi-continuous set of operators, Proposition 32.4 in Treves\cite{Tr67}. 
Consequently, this shows that $\{T(t)\}_{t \geq 0}$ is a locally equi-continuous set of operators of type $(1,0)$. 

\smallskip

The family of operators $\{T(t)\}_{t \geq 0}$ is a semigroup, because it is the point-wise limit of the semigroups $\{T_n(t)\}_{t \geq 0}$. We show that it is strongly continuous by using Lemma \ref{lemma:equivalences_strong_continuity_2}. Let $p \in \cN$ and $x \in \cD(A)$, then for every $n$: 
\begin{equation*}
p(T(t)x - x) \leq p(T(t)x - T_n(t)x) + p(T_n(t) x - x).
\end{equation*}
As $p(T(t)x - T_n(t)x) \rightarrow 0$, uniformly for $t \leq 1$, we can first choose $n$ large to make the first term on the right hand side small, and then $t$ small, to make the second term on the right hand side small.

\smallskip

We still need to prove that the semigroup $\{T(t)\}_{t \geq 0}$ has generator $(A,\cD(A))$. Denote with $(B,\cD(B))$ the generator of $\{T(t)\}_{t \geq 0}$. For $x \in \cD(A)$, we have for a continuous semi-norm $p$ that
\begin{align*}
& p\left(\frac{T(t)x - x}{t} - Ax\right) \\
& \leq p\left(\frac{T(t)x - T_n(t)x}{t} \right) + p\left(\frac{T_n(t)x - x}{t} - A_n x\right) + p(A_n x - A x),
\end{align*}
for some continuous semi-norm $q$. By repeating the argument that led to \eqref{eqn:uniform_bound_on_difference_Yosida_approximants}, we can rewrite the first term on the second line to obtain
\begin{align*}
& p\left(\frac{T(t)x - x}{t} - Ax\right) \\
& \leq q\left(Ax - A_n x\right) + p\left(\frac{T_n(t)x - x}{t} - A_n x\right) + p(A_n x - A x).
\end{align*}
By first choosing $n$ large and then $t$ small, we see that $x \in \cD(B)$ and $Bx = Ax$. In other words, $(B,\cD(B))$ extends $(A,\cD(A))$.

For $\lambda >0$, we know that $\lambda \in \rho(A)$, so $\lambda - A : \cD(A) \rightarrow X$ is bijective. As $B$ generates a semigroup of type $(1,0)$, we also have that $\lambda - B : \cD(B) \rightarrow X$ is bijective. But $B$ extends $A$, which implies that $(A,\cD(A)) = (B,\cD(B))$.

\smallskip

We extend the result for general $M \geq 1$. The strategy is to define a norm on $X$ that is equivalent to $\vn{\cdot}$ for which the semigroup that we want to construct is $(1,0)$ bounded.
Equations \eqref{eqn:norm_given_as_sup_over_N} and \eqref{eqn:Hille_Yosida_estimate_not_complex} imply that $\vn{\mu^n R(\mu)^n} \leq M$. Define
\begin{equation*}
\vn{x}_\mu := \sup_{n \geq 0} \vn{\mu^n R(\mu)^n x}
\end{equation*}
and then define $\tn{x} := \sup_{\mu > 0} \vn{x}_\mu$. This norm has the property that $\vn{x} \leq \tn{x} \leq M \vn{x}$ and $\tn{\lambda R(\lambda)} \leq 1$ for every $\lambda > 0$, see the proof of Theorem II.3.8 in \cite{EN00}. Use this norm to define a new set of continuous semi-norms as in Definition \ref{definition:def_of_N} by
\begin{equation*}
\cN^* := \{p \, | \, p \textit{ is a $\tau$ continuous semi-norm such that } p(\cdot) \leq \tn{\cdot}\}.
\end{equation*}

As a consequence of $\tn{\lambda R(\lambda)} \leq 1$ and the $\tau$ continuity of $\lambda R(\lambda)$, we obtain that for every $p \in \cN^*$ there exists $q \in \cN^*$ such that $p(\lambda R(\lambda)x) \leq q(x)$ for every $x \in X$. Likewise, we obtain for every $\lambda_0 > 0$ that for every $p \in \cN^*$ there exists $q \in \cN^*$ such that
\begin{equation*}
\sup_{\lambda \geq \lambda_0} \sup_{n \geq 1} p\left((n\lambda R(n\lambda))^n x \right) \leq q(x).
\end{equation*}

This means that we can use the first part of the proof to construct a SCLE semigroup $\{T(t)\}_{t \geq 0}$ that has bound $(1,0)$ with respect to $\cN^*$. 

Let $T \geq 0$. Pick a semi-norm $p \in \cN$. It follows that $p \in \cN^*$, so there exists a $q \in \cN^*$ such that $\sup_{t \leq T} p(T(t)x) \leq q(x)$ for all $x \in X$.

Because $\tn{\cdot} \leq M \vn{\cdot}$, it follows that $\cN^*$ is a subset of $M \cN$ which implies that $\hat{q} := \frac{1}{M} q \in \cN$. We obtain $\sup_{t \leq T} p(T(t)x) \leq M \hat{q}(x)$ for all $x \in X$.

In other words, $A$ generates a SCLE semigroup $\{T(t)\}_{t \geq 0}$ of type $(M,0)$.
\end{proof}

\section{Relating bi-continuous semigroups to SCLE semigroups} \label{section:comparison_bicontinuous}

Bi-continuous semigroups were introduced by K\"{u}hnemund\cite{Ku03} to study semigroups on Banach spaces that are strongly continuous with respect to a weaker locally convex topology $\tau$ and where $\tau$ has good sequential properties on norm bounded sets.
We will consider the \textit{mixed topology} $\gamma := \gamma(\vn{\cdot},\tau)$, introduced by Wiweger \cite{Wi61}, also see \cite{Co87}, which is the strongest locally convex topology that coincides with $\tau$ on norm bounded sets. We will show that if $\gamma$ satisfies $\gamma^+ = \gamma$, then bi-continuity of a semigroup for $\tau$ is equivalent to being SCLE for $\gamma$.

We start with the assumptions underlying bi-continuous semigroups.

\begin{condition} \label{condition:topology_for_bi_cont}
Let $(X,\vn{\cdot})$ be a Banach space with continuous dual $X_n'$ and dual unit ball $B_n$. Let $\tau$ be another, coarser, locally convex topology on $X$, with continuous dual $X_\tau'$ and dual unit ball $B_\tau = B_n \cap X_\tau'$ that has the following two properties.
\begin{enumerate}[(a)]
\item The space $(X,\tau)$ is sequentially complete on norm bounded sets.
\item $X_\tau'$ is norming for $(X,\vn{\cdot})$, i.e. $\vn{x} = \sup_{x' \in B_\tau} |\ip{x}{x'}|$.
\end{enumerate}
\end{condition}

An operator family $\{T(t)\}_{t \geq 0}$ of norm continuous operators on $X$ is called \textit{locally bi-continuous} if for any $t_0 \geq 0$ and for any norm bounded sequence $\{x_n\}_{n \geq 0}$ that converges to $x$ in $X$ with respect to $\tau$, we have
\begin{equation*}
\tau - \lim_{n \rightarrow \infty} T(t)(x_n - x) = 0
\end{equation*} 
uniformly for $t \leq t_0$. K\"{u}hnemund\cite{Ku03} then introduces bi-continuous semigroups.
 
\begin{definition}
A semigroup $\{T(t)\}_{t \geq 0}$ of norm continuous operators on $X$ is called a \textit{bi-continuous semigroup} of type $(M,\omega)$ if it satisfies the following properties.
\begin{enumerate}[(a)]
\item $\{T(t)\}_{t \geq 0}$ is $\tau$ strongly continuous.
\item $\{T(t)\}_{t \geq 0}$ is locally bi-continuous as an operator family.
\item The semigroup is exponentially bounded: $\vn{T(t)} \leq M e^{\omega t}$ for all $t \geq 0$.
\end{enumerate}
\end{definition}

We will compare bi-continuous semigroups for $\tau$ with SCLE semigroups for mixed topology $\gamma := \gamma(\vn{\cdot},\tau)$.

\begin{proposition}
Let $(X,\tau,\vn{\cdot})$ satisfy Condition \ref{condition:topology_for_bi_cont}. Then, $\gamma$ is sequentially complete and has the same bounded sets as the norm topology.
If $\gamma$ satisfies $\gamma^+ = \gamma$, then $(X,\gamma,\vn{\cdot})$ satisfies Condition C.
\end{proposition}

\begin{proof}
By Condition \ref{condition:topology_for_bi_cont}, $\tau$ and $\vn{\cdot}$ satisfy properties (n), (o) and (d) in \cite{Wi61}. Thus, it follows by the Corollary of 2.4.1 in \cite{Wi61} that the $\gamma$ bounded sets equal the norm bounded sets.

By 2.2.1 in \cite {Wi61}, $\gamma$ coincides with $\tau$ on norm bounded sets, which implies that $\gamma$ is sequentially complete.

The final statement follows from Proposition \ref{prop:what_implies_type_C}.
\end{proof}
 
The definition of bi-continuous semigroups is given using the convergence of sequences. Therefore, we expect a connection to SCLE semigroups if $\gamma^+ = \gamma$.

\begin{theorem} \label{theorem:equality_bicontinuous_SCLE}
Let $(X,\tau,\vn{\cdot})$ satisfy Condition \ref{condition:topology_for_bi_cont} and let $\gamma$ be such that $\gamma^+ = \gamma$. $\{T(t)\}_{t \geq 0}$ is bi-continuous for $\tau$ if and only if it is SCLE for $\gamma$.
\end{theorem}

This theorem is an extension of Theorem 3.4 in \cite{Fa11}, see also Section \ref{subsection:classical_topology}.

\begin{proof}
Let $\{T(t)\}_{t \geq 0}$ be bi-continuous for $\tau$. Fix $t_0 > 0$. As $\sup_{t \leq t_0} \vn{T(t)} < \infty$, it follows from 2.2.1 in \cite {Wi61} and the $\tau$ strong continuity of $\{T(t)\}_{t \geq 0}$ that the semigroup is also $\gamma$ strongly continuous.

\smallskip

As a $\gamma$ converging sequence is norm bounded, it converges for $\tau$. Thus $\{T(t)\}_{t \leq t_0}$ is sequentially equi-continuous for $\gamma$ by the local bi-continuity of $\{T(t)\}_{t \geq 0}$. It follows that for a $\gamma$ continuous semi-norm $p$ there exists a sequentially continuous semi-norm $q$ such that
\begin{equation*}
\sup_{t \leq t_0} \p{T(t)x} \leq q(x) 
\end{equation*}
for all $x \in X$. However, using that $\gamma^+ = \gamma$ and Theorem 7.4 in \cite{Wi81}, $q$ is $\gamma$ continuous. In other words, $\{T(t)\}_{t \geq 0}$ is locally equi-continuous.

\smallskip

Now let Let $\{T(t)\}_{t \geq 0}$ be SCLE for $\gamma$. The semigroup is exponentially bounded by Corollary \ref{corollary:exponential_continuity}. Thus, 2.2.1 in \cite {Wi61} implies that $t \mapsto T(t)x$ is $\tau$ continuous for every $x \in X$ and that $\{T(t)\}_{t \geq 0}$ is $\tau$ locally bi-continuous.
\end{proof}

\section{The strict topology} \label{section:application_markov}

We give two examples where a strict topology can be defined. In both cases, this topology is strongly Mackey and satisfies Condition C.

For the first example, let $E$ be a Polish space. We will define the \textit{strict topology} $\beta$ on $C_b(E)$ which is a particularly nice topology as the continuous dual of $(C_b(E),\beta)$ is the space of Radon measures on $E$ of finite total variation. Therefore, this topology is useful for, for example, the study of transition semigroups of Markov processes.

For the second example, we take a Hilbert space $\fH$ and consider the \textit{strict topology} $\beta$ on $\cB(\fH)$, the space of bounded operators on $\fH$. The dual of $(\cB(\fH),\beta)$ is the space of normal linear functionals, which are at the basis of non-commutative measure theory\cite{Ta79,KR86,BR79}. As a consequence, the space $(\cB(\fH),\beta)$ is suitable for the study of quantum dynamical semigroups.

\subsection{Definition and basic properties of the strict topology on \texorpdfstring{$C_b(E)$}{Cb(E)}} \label{subsection:classical_topology}

For every compact set $K \subseteq E$, define the semi-norm $p_K(f) := \sup_{x \in K} |f(x)|$.

The \textit{compact open} topology $\kappa$ on $C_b(E)$ is generated by the semi-norms $\{ p_K \, | \, K \text{ compact} \}$. Now define semi-norms in the following way. Pick a non-negative sequence $a_n$ in $\bR$ such that $a_n \rightarrow 0$. Also pick an arbitrary sequence of compact sets $K_n \subseteq E$. Define 
\begin{equation} \label{eqn:def_seminorm_strict}
p_{(K_n),(a_n)}(f) := \sup_n a_n p_{K_n}(f).
\end{equation}
The \textit{strict} topology $\beta = \gamma(\vn{\cdot},\kappa)$ defined on $C_b(E)$ is generated by the semi-norms
\begin{equation*}
\left\{p_{(K_n),(a_n)} \, \middle| \, K_n \text{ compact}, a_n \geq 0, a_n \rightarrow 0 \right\},
\end{equation*}
see Theorem 3.1.1 in Wiweger \cite{Wi61} and Theorem 2.4 in Sentilles \cite{Se72}. Note that in the latter paper, the topology introduced here is called the substrict topology. However, Sentilles shows in Theorem 9.1 that the strict and the substrict topology coincide when the underlying space $E$ is Polish.

Note that if additionally $(E,d)$ is locally compact, then the topology can also be given by the collection of semi-norms
\begin{equation*} 
p_g(f) := \vn{fg}
\end{equation*}
where $g$ ranges over $C_0(E)$.

\smallskip

Obviously, $C_b(E)$ can also equipped with the sup norm topology. In this situation, the set $\cN$ contains all semi-norms of the type given in Equation \eqref{eqn:def_seminorm_strict} such that $\sup_n a_n \leq 1$. 

Sentilles \cite{Se72} studied the strict topology and gives, amongst many others, the following results.

\begin{theorem} \label{theorem:propertiesCbstrict}
The space $(C_b(E),\beta)$ is complete, Mackey, satisfies $\beta^+ = \beta$ and has the same bounded sets as the norm topology.

Also, the dual of $(C_b(E), \beta)$ is the space of Radon measures of finite total variation norm.
\end{theorem}

\begin{proof}
These statements follow from Theorems 4.7, 8.1 and 9.1 in Sentilles\cite{Se72}.
\end{proof}

The next result follows directly from Propositions \ref{proposition:what_implies_compactequicont} and \ref{prop:what_implies_type_C}.

\begin{corollary}
The locally convex space $(C_b(E),\beta)$ together with the sup norm is strong Mackey and satisfies Condition C.
\end{corollary}

\subsection{Definition and basic properties of the strict topology on \texorpdfstring{$\cB(H)$}{B(H)}} \label{subsection:quantum_topology}

Let $\fH$ be a Hilbert space and let $(\cB(\fH),\vn{\cdot})$ be the Banach space of bounded linear operators on $\fH$. Furthermore, let $\cK(\fH)$ and $\cT(\fH)$ be the subspace of compact and trace class operators on $\fH$. Note that $\cB(\fH) = \cT(\fH)' = \cK(\fH)''$ as Banach spaces by Theorems II.1.6 and II.1.8 in \cite{Ta79}. 

We define four additional topologies on $\cB(\fH)$.
\begin{enumerate}[(a)]
\item The \textit{strong* (operator) topology} generated by the semi-norms $\{p_\xi \, | \, \xi \in \fH\}$, where $p_\xi(A) := \sqrt{\vn{A\xi}^2 + \vn{A^*\xi}^2}$.
\item The \textit{ultraweak (operator) topology} generated by the family of semi-norms $\{p_{T} \, | \, T \in \cT(\fH)\}$, where $p_T(A) := |\Tr(AT)|$.
\item The \textit{ultrastrong* (operator) topology} generated by the family of semi-norms $\{p_{T} \, | \, T \in \cT(\fH), T \geq 0\}$, where $p_T(A) := \sqrt{\Tr(T A^* A)}$.
\item The \textit{strict topology} $\beta$ defined by the set of semi-norms $p_B(A) := \vn{AB}$ and $q_B(A) := \vn{BA}$ for compact operators $B \in \cK(\fH)$.
\end{enumerate}

The ultraweak topology is the weak topology of the dual pair $(\cB(\fH),\cT(\fH))$ and also the ultrastrong* topology is a topology of this pair, see for example Lemma II.2.4\cite{Ta79}. The strict topology is the Mackey topology of this dual pair by Theorem 3.9 in \cite{Bu68} and Corollary 2.8 in \cite{Ta70}.

\smallskip

The linear functionals on $\cB(\fH)$ that are continuous with respect to any topology of the dual pair $(\cB(\fH),\cT(\fH))$ are called \textit{normal}, to distinguish them from the larger class of linear functionals on $\cB(\fH)$ that are continuous for the norm, see also the reference that were mentioned before\cite{Ta79,KR86,BR79}. The distinction between the two classes of functionals is analogous to the difference between Radon measures on $C_b(E)$, $E$ non-compact and Polish, and the linear functionals on $C_b(E)$ that are norm continuous.

\begin{proposition}
The space $(\cB(\fH),\beta)$ is complete, strong Mackey, the bounded sets equal the operator norm bounded sets, and $(\cT(\fH),\sigma(\cT(\fH),\cB(\fH))$ is sequentially complete. 
\end{proposition}

\begin{proof}
Proposition 3.6 in Busby \cite{Bu68} gives completeness. The principle of uniform boundedness gives equality of the bounded sets. To show that $\beta$ is strong Mackey, we need to verify that the absolutely convex hull of a $\sigma(\cT(\fH),\cB(\fH))$ compact set is also compact. This follows directly from Krein's theorem, 24.5.(4) in \cite{Ko69} as the Mackey topology $\mu(\cT(\fH),\cB(\fH))$ is the Banach topology generated by the Trace norm. The final statement follows from Corollary III.5.2 in \cite{Ta79}.
\end{proof}

\begin{corollary}
The space $(\cB(\fH),\beta)$ together with the operator norm is strong Mackey and satisfies Condition C.
\end{corollary}

If $\fH$ is separable, we additionally have the following result.

\begin{proposition}
If $\fH$ is separable, then $(\cB(\fH),\beta)$ is separable and $\beta^+ = \beta$
\end{proposition}

\begin{proof}
Suppose $\fH$ is separable. Then $\cK(\fH)$ is norm separable by Lemma 1 in Goldberg \cite{Go59} which implies that it is separable for $\beta$. By Proposition 3.5 in Busby \cite{Bu68}, $\cK(\fH)$ is $\beta$ dense in $\cB(\fH)$, which implies the first statement.

By Theorem II.2.6 and Proposition II.2.7 in \cite{Ta79} $(\cB(\fH),ultrastrong^*)$ is Mazur. Consider the topology $(ultrastrong^*)^+$. By Theorem 7.5 in Wilansky \cite{Wi81}, $(ultrastrong^*)^+$ is a topology of the dual pair $(\cB(\fH),\cT(\fH))$, hence must be coarser than the strict topology. By Theorem III.5.7 in \cite{Ta79} the strict topology coincides on bounded sets with the $ultrastrong^*$ topology. Hence, both have the same convergent sequences, which implies that $(ultrastrong^*)^+$ is finer than the strict topology. Therefore, they coincide. This also implies that $\beta^+ = \beta$.
\end{proof}

Let $\{P_t\}_{t \geq 0}$ be a strongly continuous semigroup on $\fH$. The semigroup $\{T(t)\}_{t \geq 0}$ defined on $\cB(\fH)$ by $T(t)A = P^*(t)AP(t)$ is a basic example in the study of quantum dynamical semigroups, which are normally defined to be merely continuous for the ultraweak topology\cite{Fa99}.

\begin{proposition}
The semigroup $\{T(t)\}_{t \geq 0}$ is a SCLE semigroup for the strict topology.
\end{proposition}

It is of interest to see whether more quantum dynamical semigroups are in fact continuous for the strict topology. This, however, goes beyond the scope of this paper.

\begin{proof}
Fix $A \in \cB(\fH)$. The strong continuity of $\{P(t)\}_{t \geq 0}$ implies the operator strong* continuity of $t \mapsto T(t)A$. Therefore, the trajectory $t \mapsto T(t)A$ is locally bounded for the strong* topology, and hence, by the principle of uniform boundedness for the norm topology. As the strict topology coincides with the strong* topology on bounded sets\cite[Lemma II.2.5 and Theorem III.5.7]{Ta79} $t \mapsto T(t)A$ is continuous for the strict topology. The semigroup is locally equi-continuous by Lemma \ref{lemma:strongly_cont_implies_equicont}.
\end{proof}

\section{Appendix: Stochastic domination and the Chernoff bound} \label{section:appendix_orderings}

In this appendix, we recall the definition of stochastic domination\cite[Section IV.1]{Li92} and give a number of useful results. 

\begin{definition}
Suppose that we have two random variables $\eta_1$ and $\eta_2$ taking values in $\bR$. We say that $\eta_1$ stochastically dominates $\eta_2$, denoted by $\eta_1 \succeq \eta_2$ if for every $r \in \bR$ we have $\PR[\eta_1 > r] \geq \PR[\eta_2 > r]$.
\end{definition}

\begin{lemma} \label{lemma:appendix_stochastic_domination_equivalence}
For two real valued random variables $\eta_1,\eta_2$, we have that $\eta_1 \succeq \eta_2$ if and only if for every bounded and increasing function $\phi$, we have
$\bE[\phi(\eta_1)] \geq \bE[\phi(\eta_2)]$.
\end{lemma}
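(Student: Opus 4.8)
The plan is to prove the two implications separately, the substantive one via a ``layer cake'' representation of a bounded increasing function.

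For the direction ``expectations $\Rightarrow$ domination'': assuming $\bE[\phi(\eta_1)] \geq \bE[\phi(\eta_2)]$ for every bounded increasing $\phi$, fix $r \in \bR$ and apply the hypothesis to $\phi = \bONE_{[r,\infty)}$, which is bounded and increasing. Since $\bE[\phi(\eta_i)] = \PR[\eta_i \geq r]$, this gives $\PR[\eta_1 \geq r] \geq \PR[\eta_2 \geq r]$ for all $r$, i.e. $\eta_1 \succeq \eta_2$.

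For the converse, assume $\eta_1 \succeq \eta_2$ and let $\phi$ be bounded and increasing. Being monotone, $\phi$ is Borel measurable, so $\phi(\eta_i)$ is a genuine bounded random variable. Adding a constant to $\phi$ shifts both $\bE[\phi(\eta_1)]$ and $\bE[\phi(\eta_2)]$ by the same amount, so we may assume $0 \leq \phi \leq C$ for some $C < \infty$. Then $\phi(x) = \int_0^C \bONE_{\{\phi(x) > t\}} \dd t$ for every $x \in \bR$, and by monotonicity the superlevel set $A_t := \{x \in \bR \, | \, \phi(x) > t\}$ is an up-set: either $\emptyset$, all of $\bR$, or a half-line of the form $(a_t,\infty)$ or $[a_t,\infty)$. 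Since the integrand $(x,t) \mapsto \bONE_{\{\phi(x)>t\}}$ is nonnegative and Borel, Tonelli's theorem gives
\begin{equation*}
\bE[\phi(\eta_i)] = \int_0^C \PR[\eta_i \in A_t] \dd t .
\end{equation*}
The key step is the pointwise (in $t$) inequality $\PR[\eta_1 \in A_t] \geq \PR[\eta_2 \in A_t]$: for $A_t = [a,\infty)$ this is the hypothesis verbatim; for $A_t = (a,\infty)$ it follows by letting $n \to \infty$ in $\PR[\eta_1 \geq a + \tfrac1n] \geq \PR[\eta_2 \geq a + \tfrac1n]$, using $\PR[\eta_i > a] = \lim_n \PR[\eta_i \geq a + \tfrac1n]$; the cases $A_t \in \{\emptyset, \bR\}$ are trivial. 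Integrating this inequality over $t \in [0,C]$ yields $\bE[\phi(\eta_1)] \geq \bE[\phi(\eta_2)]$.

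I do not anticipate a real obstacle; the only delicate point is distinguishing open from closed half-lines among the superlevel sets, which is exactly why the limiting identity $\PR[\eta_i > a] = \lim_n \PR[\eta_i \geq a + \tfrac1n]$ is invoked. (An alternative route is Strassen's coupling: realise both variables as $F_i^{-1}(U)$ for a single uniform $U$ and the quantile functions $F_i^{-1}$, note that the tail inequality forces $F_1^{-1} \geq F_2^{-1}$ pointwise, and conclude $\phi(\eta_1) \geq \phi(\eta_2)$ almost surely under this coupling; but the layer-cake argument above is more elementary and self-contained.)
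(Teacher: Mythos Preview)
Your proof is correct. The paper does not actually prove this lemma; it is stated without proof in the appendix, with a reference to \cite[Section IV.1]{Li92} for background on stochastic domination. Your layer-cake argument (and the coupling alternative you sketch) are both standard and complete proofs of this classical fact, so there is nothing to compare against.
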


We say that a random variable $\eta$ is $\text{Poisson}(\gamma)$ distributed, $\gamma \geq 0$, denoted by $\eta \sim \text{Poisson}(\gamma)$ if $\PR[\eta = k] = \frac{\gamma^k}{k!} e^{-\gamma}$.

\begin{lemma} \label{lemma:appendix_domination_poisson}
If $\eta_1 \sim \text{Poisson}(\gamma_1)$ and $\eta_2 \sim \text{Poisson}(\gamma_2)$ and $\gamma_1 \geq \gamma_2$, then $\eta_1 \succeq \eta_2$.
\end{lemma}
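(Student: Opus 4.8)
The plan is to prove the stochastic domination of two Poisson random variables $\eta_1 \sim \text{Poisson}(\gamma_1)$ and $\eta_2 \sim \text{Poisson}(\gamma_2)$ with $\gamma_1 \ge \gamma_2$ by exploiting the infinite divisibility of the Poisson distribution, which reduces the statement to the trivial observation that a nonnegative random variable dominates $0$.

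First I would dispense with the boundary case $\gamma_2 = 0$: then $\eta_2 = 0$ almost surely, so $\PR[\eta_2 \ge r] = 0$ for every $r > 0$ and $\PR[\eta_2 \ge r] = 1 = \PR[\eta_1 \ge r]$ for $r \le 0$, whence $\eta_1 \succeq \eta_2$ immediately. So assume $\gamma_1 \ge \gamma_2 > 0$. The key step is the standard additivity property: if $\xi \sim \text{Poisson}(\gamma_1 - \gamma_2)$ and $\eta_2 \sim \text{Poisson}(\gamma_2)$ are independent, then $\xi + \eta_2 \sim \text{Poisson}(\gamma_1)$. This is verified either by convolving the two mass functions (a short binomial-theorem calculation) or by computing generating functions, $\bE[z^{\xi+\eta_2}] = e^{(\gamma_1-\gamma_2)(z-1)} e^{\gamma_2(z-1)} = e^{\gamma_1(z-1)}$. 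Thus one may realise $\eta_1$ on a common probability space as $\eta_1 = \eta_2 + \xi$ with $\xi \ge 0$.

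With this coupling in hand, for every $r \in \bR$ we have $\{\eta_2 \ge r\} \subseteq \{\eta_2 + \xi \ge r\} = \{\eta_1 \ge r\}$ since $\xi \ge 0$, and therefore $\PR[\eta_1 \ge r] \ge \PR[\eta_2 \ge r]$. This is exactly the definition of $\eta_1 \succeq \eta_2$, completing the proof. Alternatively, and equivalently, one can avoid explicit coupling: for any bounded increasing $\phi$, writing $\eta_1 = \eta_2 + \xi$ gives $\phi(\eta_1) = \phi(\eta_2 + \xi) \ge \phi(\eta_2)$ pointwise, hence $\bE[\phi(\eta_1)] \ge \bE[\phi(\eta_2)]$, and Lemma \ref{lemma:appendix_stochastic_domination_equivalence} yields the claim.

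I do not expect any real obstacle here; the only thing to be careful about is the case $\gamma_2 = 0$ (or more pedantically $\gamma_1 = \gamma_2$, where $\xi \equiv 0$ and the statement is trivial), and making sure the independence in the additivity step is invoked correctly so that the convolution identity actually holds. Everything else is a one-line consequence of monotonicity of $\phi$ (or of the set inclusion) once the decomposition $\eta_1 \overset{d}{=} \eta_2 + \xi$ is established.
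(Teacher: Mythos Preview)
Your proof is correct and follows essentially the same approach as the paper: both use the additivity of the Poisson distribution to write $\eta_1$ in distribution as $\eta_2 + \zeta$ with $\zeta \sim \text{Poisson}(\gamma_1 - \gamma_2)$ independent of $\eta_2$, and then invoke the resulting coupling to obtain the domination. Your version is simply more detailed, spelling out the boundary cases and the verification of the convolution identity that the paper leaves implicit.
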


Using the theory of couplings \cite[Section IV.2]{Li92}, a proof follows directly from the fact that if $\gamma_1 \geq \gamma_2$, then $\eta_1$ is in distribution equal to $\eta_2 + \zeta$, where $\zeta \sim \text{Poisson}(\gamma_1 - \gamma_2)$.

\smallskip

The next result, introduced by Chernoff\cite{Ch52}, is useful in the context of stochastic domination.

\begin{proposition} \label{proposition:Chernoff}
Let $X$ be a random variable on $\bR$ for which there exists $\theta_0 > 0 $, such that for $\theta < \theta_0$, the Laplace transform $\bE[e^{\theta X}]$ exists. Let $\{X_i\}_{i \geq 1}$ be independent and distributed as $X$. Then for $c \geq \bE[X]$, we have
\begin{equation*}
\PR\left[\frac{1}{n} \sum_{i=1}^n X_i > c \right] < \exp\left\{-n \inf_{0< \theta < \theta_0} \left\{c \theta - \log \bE[e^{\theta X}] \right\}\right\}.
\end{equation*}
\end{proposition}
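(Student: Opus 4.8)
The plan is to run the classical exponentiation-plus-Markov argument (Chernoff's trick) and then optimize over the tilting parameter. Fix $n \ge 1$ and $\theta$ with $0 < \theta < \theta_0$. Because $t \mapsto e^{n\theta t}$ is strictly increasing, the event $\{\frac{1}{n}\sum_{i=1}^n X_i > c\}$ coincides with $\{e^{\theta\sum_{i=1}^n X_i} > e^{n\theta c}\}$. Writing $Y := e^{\theta\sum_{i=1}^n X_i}$, the hypothesis on the Laplace transform together with independence and equidistribution gives $\bE[Y] = \prod_{i=1}^n \bE[e^{\theta X_i}] = (\bE[e^{\theta X}])^n \in (0,\infty)$, and Markov's inequality yields
\[
\PR\Big[\frac{1}{n}\sum_{i=1}^n X_i > c\Big] \;=\; \PR\big[Y > e^{n\theta c}\big] \;\le\; e^{-n\theta c}\,\bE[Y] \;=\; \exp\big\{-n\big(c\theta - \log\bE[e^{\theta X}]\big)\big\}.
\]
Since the left-hand side does not depend on $\theta$, one then takes the best such bound over $0 < \theta < \theta_0$, i.e. replaces the exponent by the Legendre-type transform $\sup_{0 < \theta < \theta_0}\big(c\theta - \log\bE[e^{\theta X}]\big)$; the assumption $c \ge \bE[X]$ is precisely what makes this supremum non-negative (the exponent vanishes at $\theta = 0$ and, by convexity of $\theta \mapsto \log\bE[e^{\theta X}]$, has non-negative right-derivative $c - \bE[X]$ there), so the estimate is a genuine decay bound.

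The only subtlety is the strict inequality, because Markov's inequality is usually quoted with `$\le$'. I would sharpen it by hand: for a non-negative integrable $Y$ and $a > 0$,
\[
a\,\PR[Y > a] \;=\; \bE\big[a\,\bONE_{\{Y > a\}}\big] \;\le\; \bE\big[Y\,\bONE_{\{Y > a\}}\big] \;\le\; \bE[Y],
\]
and on $\{Y > a\}$ one has $a < Y$ strictly, so the first `$\le$' is strict whenever $\PR[Y > a] > 0$; if instead $\PR[Y > a] = 0$ then $\PR[Y > a] = 0 < \bE[Y]/a$ because here $Y > 0$ almost surely forces $\bE[Y] > 0$. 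Either way $\PR[Y > a] < \bE[Y]/a$, which applied with $a = e^{n\theta c}$ gives the strict bound for each fixed $\theta$. To carry strictness through the optimization over $\theta$, one evaluates the estimate at a $\theta^{\ast}$ attaining $\sup_{0<\theta<\theta_0}\big(c\theta - \log\bE[e^{\theta X}]\big)$ — which exists in the interior whenever that supremum is positive, by concavity of the exponent and its vanishing at $0$ — while in the remaining degenerate cases ($X$ almost surely constant, or $\PR[\frac{1}{n}\sum_i X_i > c] = 0$) the left-hand side is $0$ and the claim is immediate.

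I expect no serious obstacle: the content is the one-line Chernoff computation, and the work is just bookkeeping — verifying $\bE[e^{\theta X}] \in (0,\infty)$ on all of $(0,\theta_0)$ so that the product factorization and the logarithm are legitimate, and handling the strictness and the degenerate cases cleanly as above.
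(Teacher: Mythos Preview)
Your proof is correct and follows exactly the same route as the paper's: exponentiate, apply Markov's inequality, factorize by independence, and optimize over $\theta$. You are in fact more careful than the paper, which simply asserts the strict Markov inequality without comment; note also that since the statement as written carries an $\inf$ (not the usual $\sup$) in the exponent, the strict inequality survives the optimization trivially---one just evaluates the strict pointwise bound at any single $\theta \in (0,\theta_0)$ and uses $\exp\{-n g(\theta)\} \le \exp\{-n\inf_\theta g(\theta)\}$---so your discussion of attaining the supremum is not actually needed here.
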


We give a proof for completeness.

\begin{proof}
For all $0< \theta < \theta_0$, we have
\begin{align*}
\PR\left[\frac{1}{n} \sum_{i=1}^n X_i > c \right] & = \PR\left[e^{\theta \sum_{i=1}^n X_i} > e^{n\theta c} \right] \\
& < \exp \left\{- \left( n \theta c - \log \bE \left[e^{\theta \sum_{i=1}^n X_i} \right] \right) \right\},
\end{align*}
where we used Markov's inequality in line 2. As the $X_i$ are independent, $\log \bE \left[e^{\theta \sum_{i=1}^n X_i} \right] = n \log \bE \left[e^{\theta X} \right]$, which yields the final result.
\end{proof}

\textbf{Acknowledgement}
The author thanks Frank Redig, Ajit Iqbal Singh and Markus Haase for reading an early version of the manuscript and some valuable comments. Also, the author thanks an anonymous referee for some helpful suggestions that improved the exposition.

The author is supported by The Netherlands Organisation for Scientific Research (NWO), grant number 600.065.130.12N109.

\bibliographystyle{plain}

\begin{thebibliography}{10}

\bibitem{ABR12}
Angela. Albanese, Jos\'{e} Bonet, and Werner. Ricker.
\newblock Mean ergodic semigroups of operators.
\newblock {\em Revista de la Real Academia de Ciencias Exactas, Fisicas y
  Naturales. Serie A. Matematicas}, 106(2):299--319, 2012.

\bibitem{AK02}
Angela Albanese and Franziska K{\"u}hnemund.
\newblock {T}rotter-{K}ato approximation theorems for locally equicontinuous
  semigroups.
\newblock {\em Riv. Mat. Univ. Parma (7)}, 1:19--53, 2002.

\bibitem{AM04}
Angela~A. Albanese and Elisabetta Mangino.
\newblock {T}rotter-{K}ato theorems for bi-continuous semigroups and
  applications to {F}eller semigroups.
\newblock {\em Journal of Mathematical Analysis and Applications}, 289(2):477
  -- 492, 2004.

\bibitem{BS96}
T.A. Beatty and H.H. Schaefer.
\newblock Semi-bornological spaces.
\newblock {\em Mathematische Zeitschrift}, 221(1):337--351, 1996.

\bibitem{BR79}
Ola Bratelli and Derek~W. Robinson.
\newblock {\em Operator Algebras and Quantum Statistical Mechanics I}.
\newblock Springer-Verlag, 1979.

\bibitem{Bu68}
Robert~C. Busby.
\newblock Double centralizers and extensions of {$C^{\ast} $}-algebras.
\newblock {\em Trans. Amer. Math. Soc.}, 132:79--99, 1968.

\bibitem{Ch52}
Herman Chernoff.
\newblock A measure of asymptotic efficiency for tests of a hypothesis based on
  the sum of observations.
\newblock {\em The Annals of Mathematical Statistics}, 23(4):493--507, 12 1952.

\bibitem{Co87}
J.B. {Cooper}.
\newblock {\em {Saks spaces and applications to functional analysis}}.
\newblock Elsevier Science Publishers B.V., second edition, 1987.

\bibitem{De74}
Benjamin Dembart.
\newblock On the theory of semigroups of operators on locally convex spaces.
\newblock {\em Journal of Functional Analysis}, 16(2):123 -- 160, 1974.

\bibitem{EN00}
Klaus-Jochen Engel and Rainer Nagel.
\newblock {\em One-Parameter Semigroups for Linear Evolution Equations}.
\newblock Springer-Verlag, 2000.

\bibitem{Fa99}
Franco Fagnola.
\newblock Quantum {M}arkov semigroups and quantum flows.
\newblock {\em Proyecciones}, 18(3):144, 1999.

\bibitem{Fa04}
B\'{a}lint Farkas.
\newblock Perturbations of bi-continuous semigroups with applications to
  transition semigroups on {Cb(H)}.
\newblock {\em Semigroup Forum}, 68(1):87--107, 2004.

\bibitem{Fa11}
B\'{a}lint Farkas.
\newblock Adjoint bi-continuous semigroups and semigroups on the space of
  measures.
\newblock {\em Czechoslovak Mathematical Journal}, 61(2):309--322, 2011.

\bibitem{Go59}
Seymour Goldberg.
\newblock Some properties of the space of compact operators on a {H}ilbert
  space.
\newblock {\em Math. Ann.}, 138:329--331, 1959.

\bibitem{KR86}
Richard~V. Kadison and John~R. Ringrose.
\newblock {\em Fundamentals of the theory of operator algebras. {V}ol. {II}}.
\newblock Academic Press, Inc., Orlando, FL, 1986.

\bibitem{Ka71}
N.~J. Kalton.
\newblock Some forms of the Closed graph theorem.
\newblock {\em Mathematical Proceedings of the Cambridge Philosophical
  Society}, 70:401--408, 11 1971.

\bibitem{Ko68}
Takako K{\=o}mura.
\newblock Semigroups of operators in locally convex spaces.
\newblock {\em J. Functional Analysis}, 2:258--296, 1968.

\bibitem{Ko69}
Gottfried K\"{o}the.
\newblock {\em Topological Vector Spaces I}.
\newblock Springer-Verlag, 1969.

\bibitem{Ko79}
Gottfried K\"{o}the.
\newblock {\em Topological Vector Spaces II}.
\newblock Springer-Verlag, 1979.

\bibitem{Ku03}
Franziska K\"{u}hnemund.
\newblock A {H}ille-{Y}osida theorem for bi-continuous semigroups.
\newblock {\em Semigroup Forum}, 67(2):205--225, 2003.

\bibitem{Ku09}
Markus Kunze.
\newblock Continuity and equicontinuity of semigroups on norming dual pairs.
\newblock {\em Semigroup Forum}, 79(3):540--560, 2009.

\bibitem{Ku11}
Markus Kunze.
\newblock A {P}ettis-type integral and applications to transition semigroups.
\newblock {\em Czechoslovak Mathematical Journal}, 61(2):437--459, 2011.

\bibitem{Li92}
Torgny Lindvall.
\newblock {\em Lectures on the coupling method}.
\newblock Dover Publications, Inc., 1992.

\bibitem{Ou73}
Sunao \={O}uchi.
\newblock Semi-groups of operators in locally convex spaces.
\newblock {\em Journal of the Mathematical Society of Japan}, 25(2):265--276,
  04 1973.

\bibitem{PCB87}
J.~Bonet P.~P\`{e}rez~Carreras.
\newblock {\em Barrelled Locally Convex Spaces}.
\newblock Elsevier Science Publishers B.V., 1987.

\bibitem{Pf76}
Helmut Pfister.
\newblock Bemerkungen zum satz \"{u}ber die separabilit\"{a}t der
  {F}r\'{e}chet-{M}ontel-r\"{a}ume.
\newblock {\em Archiv der Mathematik}, 27(1):86--92, 1976.

\bibitem{SS97}
Stephen~A. Saxon and L.~M. S{\'a}nchez~Ruiz.
\newblock Dual local completeness.
\newblock {\em Proc. Amer. Math. Soc.}, 125(4):1063--1070, 1997.

\bibitem{Se72}
F.~Dennis Sentilles.
\newblock Bounded continuous functions on a completely regular space.
\newblock {\em Trans. Amer. Math. Soc.}, 168:311--336, 1972.

\bibitem{Ta79}
Masamichi Takesaki.
\newblock {\em Theory of Operators Algebras I}.
\newblock Springer-Verlag, 1979.

\bibitem{Ta70}
Donald~Curtis Taylor.
\newblock The strict topology for double centralizer algebras.
\newblock {\em Transactions of the American Mathematical Society}, 150(2):pp.
  633--643, 1970.

\bibitem{Tr67}
Fran\c{c}ois Treves.
\newblock {\em Topological Vector Spaces, Distributions and Kernels}.
\newblock Academic Press, 1967.

\bibitem{We68}
J.~H. Webb.
\newblock Sequential convergence in locally convex spaces.
\newblock {\em Mathematical Proceedings of the Cambridge Philosophical
  Society}, 64:341--364, 4 1968.

\bibitem{Wi81}
Albert Wilansky.
\newblock Mazur spaces.
\newblock {\em {Int. J. Math. Math. Sci.}}, 4:39--53, 1981.

\bibitem{Wi61}
A.~Wiweger.
\newblock Linear spaces with mixed topology.
\newblock {\em Studia Math.}, 20:47--68, 1961.

\bibitem{Yo78}
K~Yosida.
\newblock {\em Functional Analysis}.
\newblock Springer-Verlag, fifth edition, 1978.

\end{thebibliography}

\end{document}